\definecolor{Red}{cmyk}{0,1,1,0}
\definecolor{verde}{cmyk}{1,0,1,0}
\definecolor{loka}{cmyk}{.5,0,1,.5}
\definecolor{azul}{cmyk}{1,1,0,0}
\numberwithin{equation}{section}
\newcommand{\be}{\begin{equation}}
\newcommand{\ee}{\end{equation}}
\newtheorem{theorem}{Theorem}
\newtheorem{definition}{Definition}
\newtheorem{remark}{Remark}
\newtheorem{corollary}[equation]{Corollary}
\newtheorem{example}{Example}
\begin{document}
\title{A truncated $\mathcal{V}$-fractional derivative in $\mathbb{R}^n$}
\author{J. Vanterler da C. Sousa$^1$}
\address{$^1$ Department of Applied Mathematics, Institute of Mathematics,
 Statistics and Scientific Computation, University of Campinas --
UNICAMP, rua S\'ergio Buarque de Holanda 651,
13083--859, Campinas SP, Brazil\newline
e-mail: {\itshape \texttt{ra160908@ime.unicamp.br, capelas@ime.unicamp.br }}}

\author{E. Capelas de Oliveira$^1$}

\begin{abstract}Using the six parameters truncated Mittag-Leffler function, we introduce a convenient truncated function to define the so-called truncated $\mathcal{V}$-fractional derivative type. After a discussion involving some properties associated with this derivative, we propose the derivative of a vector valued function and define the $\mathcal{V}$-fractional Jacobian matrix whose properties allow us to say that: the multivariable truncated $\mathcal{V}$-fractional derivative type, as proposed here, generalizes the truncated
$\mathcal{V}$-fractional derivative type and can bee extended to obtain a truncated $\mathcal{V}$-fractional partial derivative type. As applications we discuss and prove the change of order associated with two index i.e., the commutativity of two truncated $\mathcal{V}$-fractional partial derivative type and propose the truncated $\mathcal{V}$-fractional Green's theorem.

\vskip.5cm
\noindent
\emph{Keywords}: Truncated $\mathcal{V}$-fractional derivative, multivariable truncated $\mathcal{V}$-fractional derivative, truncated $\mathcal{V}$-fractional partial derivative, truncated $\mathcal{V}$-fractional Jacobian matrix, truncated $\mathcal{V}$-fractional Green's theorem.
\newline 
MSC 2010 subject classifications. 26A33; 26B12; 33EXX.
\end{abstract}
\maketitle

\section{Introduction}
Recently, Sousa and Oliveira \cite{JEC2} introduced the truncated $\mathcal{V}$-fractional derivative in the domain $\mathbb{R}$, satisfying classical properties of the integer-order calculus, having as special property, to unify five other formulations of local fractional derivatives of which we mention the derivatives: conformable fractional, alternative fractional, truncated alternative fractional, $M$-fractional and truncated $M$-fractional \cite{UNT2, KRHA, JEC, JEC1}.

In 2015, Atangana et al. \cite{AABD}, performed a work approaching new properties of the conformable fractional derivative, being the domain of the functions considered in $\mathbb{R}^n$. In 2017, Gözütok and Gözütok \cite {GNY} introduced the multivariable conformable fractional calculus, presenting interesting results found in $\mathbb{R}^n$. However, such a result is restricted only to the conformable fractional derivative. In this sense, we extend our definition of the truncated $\mathcal{V}$-fractional derivative to the $\mathbb{R}^n$ \cite{JEC2}, since such a derivative formulation unifies the remaining five.
We denote this new differential operator by $_{i}^{\rho }\mathbb{V}_{\gamma ,\beta ,\alpha }^{\delta ,p,q}(z)$, $z\in\mathbb{R}^n$, to differentiate from the operator $_{i}^{\rho }\mathcal{V}_{\gamma ,\beta ,\alpha }^{\delta ,p,q}(z)$, $z\in\mathbb{R}$, where the parameter $\alpha$, associated with the order of the derivative is such that $0<\alpha<1$, where $\gamma ,\beta ,\rho ,\delta \in \mathbb{C}$ and $p,q>0$ such that ${Re}\left( \gamma \right) >0$, ${Re}\left( \beta \right) >0$, ${Re}\left( \rho \right) >0$, ${Re}\left( \delta \right) >0$ and ${Re}\left( \gamma \right) +p\geq q$.

The article is organized as follows: in section 2, we present the truncated $\mathcal{V}$-fractional derivative by means of the truncated six parameters Mittag-Leffler function. Also, three theorems have been introduced that address linearity, product, divisibility, continuity, and the $\alpha$-differentiable chain rule. In section 3, we introduce our main result, the multivariable truncated $\mathcal{V}$-fractional derivative as well as results that justifies its continuity and uniqueness. In this sense, we introduce the $\mathcal{V}$-fractional Jacobian matrix and introduce and prove two theorems dealing with: chain rule, linearity and the product of functions through the $\alpha$-differentiable operator. In section 4, we present the concept of $\mathcal{V}$-fractional partial derivative and discuss two applications i.e., a theorem associated with the commutativity of two truncated $\mathcal{V}$-fractional derivatives and $\mathcal{V}$-fractional Green's theorem. Concluding remarks close the article.

%%%%%%%%%%%%%%%%%%%%%%%%%%%%%%%%%%%%%%%%%%%%%%%%%%%%%%%%%%%%%%%%%%%%%%%%%%%%%%%%%%%%%%%%%%%%%%%%%%%%%%%%%%%%%%%%%%%%%%%%%%%%%%%%%%%%%%%%%%%%%%%%%%%%%%%%%%%%
\section{Preliminaries}

We will present the definition of the truncated $\mathcal{V}$-fractional derivative through the truncated six parameters Mittag-Leffler function and the gamma function. In this sense, we will present theorems that relate to the continuity and linearity, product, divisibility, as well as the chain rule.

Then, we begin with the definition of the six parameters truncated Mittag-Leffler function given by \cite{JEC2},
\begin{equation}\label{A9}
_{i}\mathbb{E}_{\gamma ,\beta ,p}^{\rho ,\delta ,q}\left( z\right) =\overset{i}{%
\underset{k=0}{\sum }}\frac{\left( \rho \right) _{qk}}{\left( \delta \right)
_{pk}}\frac{z^{k}}{\Gamma \left( \gamma k+\beta \right) },
\end{equation}
being $\gamma ,\beta ,\rho ,\delta \in \mathbb{C}$ and $p,q>0$ such that ${Re}\left( \gamma \right) >0$, ${Re}\left( \beta \right) >0$, ${Re}\left( \rho \right) >0$, ${Re}\left( \delta \right) >0$, ${Re}\left( \gamma \right) +p\geq q$ and $\left( \delta \right) _{pk}$, $\left( \rho \right) _{qk}$ given by
\begin{equation}\label{A6}
\left( \rho \right) _{qk}=\frac{\Gamma \left( \rho +qk\right) }{\Gamma\left( \rho \right) },
\end{equation}
a generalization of the Pochhammer symbol and $\Gamma(\cdot)$ is the function gamma.

From Eq.(\ref{A9}), we introduce the following truncated function, denoted by $_{i}H^{\rho,\delta,q}_{\gamma,\beta,p}(z)$, by means of
\begin{equation}\label{A10}
_{i}H_{\gamma ,\beta ,p}^{\rho ,\delta ,q}\left( z\right) :=\Gamma \left( \beta
\right) \;_{i}\mathbb{E}_{\gamma ,\beta ,p}^{\rho ,\delta ,q}\left( z\right) =\Gamma
\left( \beta \right) \overset{i}{\underset{k=0}{\sum }}\frac{\left( \rho
\right) _{kq}}{\left( \delta \right) _{kp}}\frac{z^{k}}{\Gamma \left( \gamma
k+\beta \right) }.
\end{equation}

In order to simplify notation, in this work, if the truncated $\mathcal{V}$-fractional derivative of order $\alpha$, according to Eq.(\ref{A11}) below, of a function $f$ exists, we simply say that the $f$ function is $\alpha$-differentiable.

So, we start with the following definition, which is a generalization of the usual definition of a derivative presented as a particular limit.

\begin{definition}\label{def7} Let $f:\left[ 0,\infty \right) \rightarrow \mathbb{R}$. For $0<\alpha <1$ the truncated $\mathcal{V}$-fractional derivative of $f$ of order $\alpha$, denoted by $_{i}^{\rho }\mathcal{V}_{\gamma ,\beta ,\alpha }^{\delta ,p,q}(\cdot)$, is defined as
\begin{equation}\label{A11}
_{i}^{\rho }\mathcal{V}_{\gamma ,\beta ,\alpha }^{\delta ,p,q}f\left( t\right) :=%
\underset{\epsilon \rightarrow 0}{\lim }\frac{f\left( t\;_{i}H_{\gamma ,\beta
,p}^{\rho ,\delta ,q}\left( \epsilon t^{-\alpha }\right) \right) -f\left(
t\right) }{\epsilon },
\end{equation}
for $\forall t>0$, $_{i}H_{\gamma ,\beta ,p}^{\rho ,\delta ,q}\left( \cdot\right) $ is a truncated function  as defined in {\rm Eq.(\ref{A10})} and being $\gamma ,\beta ,\rho ,\delta \in \mathbb{C}$ and $p,q>0$ such that ${Re}\left( \gamma \right) >0$, ${Re}\left( \beta \right) >0$, ${Re}\left( \rho \right) >0$, ${Re}\left( \delta \right) >0$, ${Re}\left( \gamma \right) +p\geq q$ and $\left( \delta \right) _{pk}$, $\left( \rho \right) _{qk}$ given by {\rm Eq.(\ref{A6})} {\rm \cite{JEC2}}. 
\end{definition}

Below, we recover three theorems (the proofs can be found in \cite{JEC2}) without proofs which are important in what follows.

\begin{theorem}\label{teo1} If the function $f:\left[ 0,\infty \right) \rightarrow \mathbb{R}$ is $\alpha $-differentiable for $t_{0}>0$, with $0<\alpha \leq 1$, then $f$ is continuous in $t_{0}$. 
\end{theorem}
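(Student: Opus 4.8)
The plan is to adapt the classical ``differentiability implies continuity'' argument to the present setting, the only extra ingredient being the behaviour of the truncated function $_{i}H_{\gamma ,\beta ,p}^{\rho ,\delta ,q}$ near the origin. First I would record that evaluating the series in Eq.(\ref{A10}) at $z=0$ leaves only the $k=0$ term, so that $_{i}H_{\gamma ,\beta ,p}^{\rho ,\delta ,q}(0)=\Gamma(\beta)\cdot\frac{1}{\Gamma(\beta)}=1$. Consequently, as $\epsilon\to 0$ we have $\epsilon t_{0}^{-\alpha}\to 0$ and hence $t_{0}\,{}_{i}H_{\gamma ,\beta ,p}^{\rho ,\delta ,q}(\epsilon t_{0}^{-\alpha})\to t_{0}$; this is exactly what will let us read off a statement about $f$ near $t_{0}$.

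The second step is the algebraic device of multiplying and dividing by $\epsilon$. For $\epsilon\neq 0$ write
\begin{equation*}
f\!\left(t_{0}\,{}_{i}H_{\gamma ,\beta ,p}^{\rho ,\delta ,q}(\epsilon t_{0}^{-\alpha})\right)-f(t_{0})
=\frac{f\!\left(t_{0}\,{}_{i}H_{\gamma ,\beta ,p}^{\rho ,\delta ,q}(\epsilon t_{0}^{-\alpha})\right)-f(t_{0})}{\epsilon}\cdot\epsilon .
\end{equation*}
By hypothesis $f$ is $\alpha$-differentiable at $t_{0}$, so as $\epsilon\to 0$ the first factor on the right converges to $_{i}^{\rho }\mathcal{V}_{\gamma ,\beta ,\alpha }^{\delta ,p,q}f(t_{0})$, a finite number, while the second factor tends to $0$. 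Using that the limit of a product is the product of the limits, the left-hand side tends to $0$, i.e. $\lim_{\epsilon\to 0} f\!\left(t_{0}\,{}_{i}H_{\gamma ,\beta ,p}^{\rho ,\delta ,q}(\epsilon t_{0}^{-\alpha})\right)=f(t_{0})$.

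Finally I would translate this into the usual continuity statement through the change of variable $h=t_{0}\left({}_{i}H_{\gamma ,\beta ,p}^{\rho ,\delta ,q}(\epsilon t_{0}^{-\alpha})-1\right)$, so that the argument of $f$ above is precisely $t_{0}+h$ and $h\to 0$ as $\epsilon\to 0$ by the first step; this yields $\lim_{h\to 0} f(t_{0}+h)=f(t_{0})$, which is continuity at $t_{0}$. The one point that deserves care --- and which I expect to be the main obstacle --- is that the limit just obtained is taken along the specific curve $\epsilon\mapsto t_{0}\,{}_{i}H_{\gamma ,\beta ,p}^{\rho ,\delta ,q}(\epsilon t_{0}^{-\alpha})$ rather than over an arbitrary approach to $t_{0}$. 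To upgrade it to genuine continuity one checks that this curve sweeps out a full neighbourhood of $t_{0}$: differentiating Eq.(\ref{A10}) term by term gives $\frac{d}{dz}{}_{i}H_{\gamma ,\beta ,p}^{\rho ,\delta ,q}(z)\big|_{z=0}=\Gamma(\beta)\frac{(\rho)_{q}}{(\delta)_{p}\,\Gamma(\gamma+\beta)}\neq 0$, so the map $\epsilon\mapsto t_{0}+h(\epsilon)$ is locally invertible near $\epsilon=0$ and $h$ ranges over a neighbourhood of $0$; composing with this local inverse removes the dependence on the particular curve and delivers the result.
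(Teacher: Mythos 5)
The paper does not actually prove this theorem: it is stated as one of three results ``recovered without proofs'' from reference \cite{JEC2}, so there is no in-paper argument to compare against. Your proof is the standard one for conformable-type derivatives (and is the argument used in \cite{JEC2} and its predecessors): note that ${}_{i}H_{\gamma,\beta,p}^{\rho,\delta,q}(0)=1$ because only the $k=0$ term survives, multiply and divide the increment by $\epsilon$, and use that the difference quotient converges to the finite value ${}_{i}^{\rho}\mathcal{V}_{\gamma,\beta,\alpha}^{\delta,p,q}f(t_{0})$ while the factor $\epsilon$ tends to $0$. That part is correct. Your final step is the one place where you go beyond what such papers typically write down, and it is to your credit: the limit obtained is a priori only along the curve $\epsilon\mapsto t_{0}\,{}_{i}H_{\gamma,\beta,p}^{\rho,\delta,q}(\epsilon t_{0}^{-\alpha})$, and upgrading it to genuine continuity requires knowing that this map is a local homeomorphism onto a neighbourhood of $t_{0}$. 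Since the truncated function is a polynomial with nonzero derivative $\Gamma(\beta)(\rho)_{q}/\bigl((\delta)_{p}\Gamma(\gamma+\beta)\bigr)$ at the origin (implicitly assumed real and nonzero throughout the paper), your inverse-function argument closes this gap correctly, and the continuity of the local inverse lets you pass from the curve limit to $\lim_{h\to 0}f(t_{0}+h)=f(t_{0})$. I see no error.
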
 

\begin{theorem}\label{teo2} Let $0<\alpha\leq 1$, $a,b\in\mathbb{R}$, $\gamma ,\beta ,\rho ,\delta \in \mathbb{C}$ and $p,q>0$ such that ${Re}\left( \gamma \right) >0$, ${Re}\left( \beta \right) >0$, ${Re}\left( \rho \right) >0$, ${Re}\left( \delta \right) >0$, ${Re}\left( \gamma \right) +p\geq q$ and $f,g$ $\alpha$-differentiable, for $t>0$. Then,
\begin{enumerate}
\item $_{i}^{\rho }\mathcal{V}_{\gamma ,\beta ,\alpha }^{\delta ,p,q}\left( af+bg\right)
\left( t\right) =a\, _{i}^{\rho }\mathcal{V}_{\gamma ,\beta ,\alpha }^{\delta
,p,q}f\left( t\right) +b\, _{i}^{\rho }\mathcal{V}_{\gamma ,\beta ,\alpha
}^{\delta ,p,q}g\left( t\right) $

\item $_{i}^{\rho }\mathcal{V}_{\gamma ,\beta ,\alpha }^{\delta ,p,q}\left( f\cdot g\right)
\left( t\right) =f\left( t\right) \, _{i}^{\rho }\mathcal{V}_{\gamma ,\beta
,\alpha }^{\delta ,p,q}g\left( t\right)+g\left( t\right) \,
_{i}^{\rho }\mathcal{V}_{\gamma ,\beta ,\alpha }^{\delta ,p,q}f\left( t\right) 
$

\item $_{i}^{\rho }\mathcal{V}_{\gamma ,\beta ,\alpha }^{\delta ,p,q}\left( \frac{f}{g}%
\right) \left( t\right) =\displaystyle\frac{g\left( t\right) \,_{i}^{\rho }\mathcal{V}_{\gamma
,\beta ,\alpha }^{\delta ,p,q}f\left( t\right) -f\left( t\right)
\, _{i}^{\rho }\mathcal{V}_{\gamma ,\beta ,\alpha }^{\delta ,p,q}g\left( t\right) }{\left[ g\left( t\right) \right] ^{2}}$

\item $_{i}^{\rho }\mathcal{V}_{\gamma ,\beta ,\alpha }^{\delta ,p,q}\left( c\right) =0$, where $f(t)=c$ is a constant.

\item If $f$ is differentiable, then $_{i}^{\rho }\mathcal{V}_{\gamma ,\beta ,\alpha }^{\delta ,p,q}f\left( t\right) =\displaystyle\frac{t^{1-\alpha }\Gamma \left( \beta \right) \left( \rho \right) _{q}}{\Gamma\left( \gamma +\beta \right) \left( \delta \right) _{p}}\frac{df\left(
t\right) }{dt}$.

\item $_{i}^{\rho }\mathcal{V}_{\gamma ,\beta ,\alpha }^{\delta ,p,q}\left(
t^{a}\right) =\displaystyle\frac{\Gamma \left( \beta \right) \left( \rho \right) _{q}}{%
\Gamma \left( \gamma +\beta \right) \left( \delta \right) _{p}}at^{a-\alpha }.$
\end{enumerate}
\end{theorem}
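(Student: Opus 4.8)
The plan is to reduce every statement to the behaviour of the truncated function $_{i}H_{\gamma,\beta,p}^{\rho,\delta,q}$ near the origin. First I would record its two leading Taylor coefficients. Since the sum in Eq.(\ref{A10}) is finite, $_{i}H_{\gamma,\beta,p}^{\rho,\delta,q}(z)$ is a polynomial in $z$; the $k=0$ term gives $_{i}H_{\gamma,\beta,p}^{\rho,\delta,q}(0)=1$, while the $k=1$ term supplies the derivative at the origin, so that
\begin{equation}\label{Cconst}
_{i}H_{\gamma,\beta,p}^{\rho,\delta,q}(z)=1+\frac{\Gamma(\beta)(\rho)_q}{\Gamma(\gamma+\beta)(\delta)_p}\,z+O(z^2)\qquad(z\to 0).
\end{equation}
Write $C:=\Gamma(\beta)(\rho)_q/[\Gamma(\gamma+\beta)(\delta)_p]$ for the linear coefficient. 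Setting $z=\epsilon t^{-\alpha}$ and multiplying by $t$ yields the crucial expansion of the argument appearing in Eq.(\ref{A11}),
\begin{equation}\label{argexp}
u_{\epsilon}:=t\,_{i}H_{\gamma,\beta,p}^{\rho,\delta,q}(\epsilon t^{-\alpha})=t+C\,t^{1-\alpha}\,\epsilon+O(\epsilon^2),
\end{equation}
so in particular $u_{\epsilon}\to t$ as $\epsilon\to 0$.

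With \eqref{argexp} in hand I would prove item (5) first, as it is the computational heart of the theorem. If $f$ is differentiable at $t$, then $f(u_{\epsilon})-f(t)=f'(t)(u_{\epsilon}-t)+o(u_{\epsilon}-t)$; substituting \eqref{argexp}, dividing by $\epsilon$, and using that $(u_{\epsilon}-t)/\epsilon\to C\,t^{1-\alpha}$ leaves exactly $C\,t^{1-\alpha}f'(t)$, which is item (5). Item (6) is then immediate by applying (5) to $f(t)=t^{a}$, whose ordinary derivative is $at^{a-1}$.

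For the algebraic rules (1)--(4) I would argue directly from the limit in Eq.(\ref{A11}), using only that $f,g$ are $\alpha$-differentiable, since these do not presuppose ordinary differentiability. Linearity (1) is immediate from the additivity of the difference quotient $[(af+bg)(u_{\epsilon})-(af+bg)(t)]/\epsilon$ together with linearity of the limit, and (4) follows because a constant makes every difference quotient vanish. For the product rule (2) I would insert the term $\pm f(u_{\epsilon})g(t)$ to split the difference quotient as
\[
\frac{f(u_{\epsilon})g(u_{\epsilon})-f(t)g(t)}{\epsilon}
=f(u_{\epsilon})\,\frac{g(u_{\epsilon})-g(t)}{\epsilon}+g(t)\,\frac{f(u_{\epsilon})-f(t)}{\epsilon},
\]
and then pass to the limit; the quotient rule (3) is handled by the analogous splitting of $f(u_{\epsilon})/g(u_{\epsilon})-f(t)/g(t)$ over the common denominator $g(u_{\epsilon})g(t)$.

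The step requiring the most care --- and the main obstacle --- is justifying the passage to the limit in (2) and (3). There one needs $f(u_{\epsilon})\to f(t)$ (respectively $g(u_{\epsilon})\to g(t)$) as $\epsilon\to 0$; this does not follow from the difference quotient itself but from continuity, which is supplied by Theorem \ref{teo1}: $\alpha$-differentiability of $f$ and $g$ forces their continuity at $t$, and since $u_{\epsilon}\to t$ by \eqref{argexp} we conclude $f(u_{\epsilon})\to f(t)$. Once this continuity input is combined with \eqref{argexp}, each limit factors into the asserted products of $\mathcal{V}$-fractional derivatives, completing all six items.
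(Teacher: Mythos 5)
Your proposal is correct. Note that the paper itself does not prove this theorem: it is quoted from the earlier reference \cite{JEC2} with the remark that the proofs can be found there, so there is no in-paper argument to compare against line by line. That said, your route is exactly the standard one used both in that reference and implicitly elsewhere in this paper (for instance in the proof of the commutativity theorem, which relies on the same expansion $t\,_{i}H_{\gamma ,\beta ,p}^{\rho ,\delta ,q}(\epsilon t^{-\alpha }) = t + \frac{\Gamma (\beta )(\rho )_{q}}{\Gamma (\gamma +\beta )(\delta )_{p}}\,\epsilon\, t^{1-\alpha } + O(\epsilon ^{2})$): extract the constant and linear Taylor coefficients of the truncated function, deduce item (5) from ordinary differentiability, get (6) as a corollary, and handle (1)--(4) by the classical add-and-subtract decompositions, with the continuity supplied by Theorem \ref{teo1} justifying the passage to the limit. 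No gaps.
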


\begin{theorem}\label{teo3} {\rm\text{(Chain rule)}} Assume $f,g:(0,\infty)\rightarrow \mathbb{R}$  be two $\alpha$-differentiable functions where $0<\alpha\leq 1$. Let $\gamma ,\beta ,\rho ,\delta \in \mathbb{C}$ and $p,q>0$ such that ${Re}\left( \gamma \right) >0$, ${Re}\left( \beta \right) >0$, ${Re}\left( \rho \right) >0$, ${Re}\left( \delta \right) >0$, ${Re}\left( \gamma \right) +p\geq q$ then $\left( f\circ g\right)$ is $\alpha$-differentiable and for all $t>0$, we have
\begin{equation*}
_{i}^{\rho }\mathcal{V}_{\gamma ,\beta ,\alpha }^{\delta ,p,q}\left( f\circ
g\right) \left( t\right) =f^{\prime }\left( g\left( t\right) \right) \,
_{i}^{\rho }\mathcal{V}_{\gamma ,\beta ,\alpha }^{\delta ,p,q}g\left(
t\right),
\end{equation*}
for $f$ differentiable in $g(t)$.
\end{theorem}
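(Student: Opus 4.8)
The plan is to work directly from Definition~\ref{def7}. Writing $u_{\epsilon}:=t\;_{i}H_{\gamma ,\beta ,p}^{\rho ,\delta ,q}\left( \epsilon t^{-\alpha }\right)$ for brevity, the definition applied to the composition reads
\begin{equation*}
_{i}^{\rho }\mathcal{V}_{\gamma ,\beta ,\alpha }^{\delta ,p,q}\left( f\circ g\right)\left( t\right) =\underset{\epsilon \rightarrow 0}{\lim }\frac{f\left( g\left( u_{\epsilon}\right)\right) -f\left( g\left( t\right)\right) }{\epsilon }.
\end{equation*}
The starting observation is that $_{i}H_{\gamma ,\beta ,p}^{\rho ,\delta ,q}\left( 0\right) =1$, since only the $k=0$ term of Eq.(\ref{A10}) survives at the origin; hence $u_{\epsilon }\to t$ as $\epsilon \to 0$, and since $g$ is $\alpha$-differentiable it is continuous at $t$ by Theorem~\ref{teo1}, so that $g\left( u_{\epsilon }\right)\to g\left( t\right)$.

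The only delicate point is that the natural factorization $\frac{f(g(u_{\epsilon}))-f(g(t))}{g(u_{\epsilon})-g(t)}\cdot\frac{g(u_{\epsilon})-g(t)}{\epsilon}$ breaks down whenever $g(u_{\epsilon})=g(t)$, which may occur for a sequence of parameters $\epsilon$ tending to $0$. To circumvent this I would introduce the auxiliary function
\begin{equation*}
\phi \left( y\right) :=
\begin{cases}
\dfrac{f\left( y\right) -f\left( g\left( t\right)\right) }{y-g\left( t\right) }, & y\neq g\left( t\right),\\[2mm]
f^{\prime }\left( g\left( t\right)\right), & y=g\left( t\right),
\end{cases}
\end{equation*}
which satisfies the identity $f\left( y\right) -f\left( g\left( t\right)\right) =\phi \left( y\right)\left( y-g\left( t\right)\right)$ for every $y$, and which is continuous at $y=g\left( t\right)$ precisely because $f$ is differentiable there.

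Substituting $y=g\left( u_{\epsilon }\right)$ into this identity yields
\begin{equation*}
\frac{f\left( g\left( u_{\epsilon }\right)\right) -f\left( g\left( t\right)\right) }{\epsilon }=\phi \left( g\left( u_{\epsilon }\right)\right)\cdot \frac{g\left( u_{\epsilon }\right) -g\left( t\right) }{\epsilon },
\end{equation*}
which is valid for all $\epsilon$, including those with $g\left( u_{\epsilon }\right)=g\left( t\right)$. As $\epsilon \to 0$ the second factor converges to $_{i}^{\rho }\mathcal{V}_{\gamma ,\beta ,\alpha }^{\delta ,p,q}g\left( t\right)$ by Definition~\ref{def7}; for the first factor, $g\left( u_{\epsilon }\right)\to g\left( t\right)$ together with the continuity of $\phi$ at $g\left( t\right)$ gives $\phi \left( g\left( u_{\epsilon }\right)\right)\to \phi \left( g\left( t\right)\right) =f^{\prime }\left( g\left( t\right)\right)$. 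Taking the product of the two limits produces the claimed identity, and the existence of both limits shows along the way that $f\circ g$ is $\alpha$-differentiable. I expect the auxiliary-function device handling the vanishing of $g(u_{\epsilon})-g(t)$ to be the one genuinely non-routine step; everything else reduces to a direct appeal to continuity (Theorem~\ref{teo1}), to the normalization $_{i}H_{\gamma,\beta,p}^{\rho,\delta,q}(0)=1$, and to the definition of the derivative. I note finally that under the stronger assumption that $g$ is ordinarily differentiable the result follows at once by combining part (5) of Theorem~\ref{teo2} with the classical chain rule, whereas the limit argument above requires only $\alpha$-differentiability of $g$.
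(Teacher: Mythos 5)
Your argument is correct, but note that this paper does not actually supply a proof of Theorem~\ref{teo3}: it is one of the three results explicitly ``recovered without proofs'' from the reference \cite{JEC2}, so there is no in-text proof to compare against line by line. Judged on its own, your proof is sound and, if anything, more careful than what is standard in this literature. The two facts you need are exactly available: $_{i}H_{\gamma,\beta,p}^{\rho,\delta,q}(0)=1$ because only the $k=0$ term of Eq.~(\ref{A10}) survives and equals $\Gamma(\beta)\cdot\frac{1}{\Gamma(\beta)}=1$, so $u_{\epsilon}\to t$; and $\lim_{\epsilon\to 0}\frac{g(u_{\epsilon})-g(t)}{\epsilon}$ is literally the quantity defining $_{i}^{\rho}\mathcal{V}_{\gamma,\beta,\alpha}^{\delta,p,q}g(t)$ in Definition~\ref{def7}, so no further manipulation is needed for the second factor. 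The Carath\'eodory-style auxiliary function $\phi$ correctly disposes of the degenerate case $g(u_{\epsilon})=g(t)$, which the naive factorization $\frac{f(g(u_{\epsilon}))-f(g(t))}{g(u_{\epsilon})-g(t)}\cdot\frac{g(u_{\epsilon})-g(t)}{\epsilon}$ (the route typically taken in proofs of chain rules for conformable-type derivatives, usually with at most a casual case split) does not handle cleanly. Two cosmetic remarks: your appeal to Theorem~\ref{teo1} is slightly stronger than necessary, since what that theorem's usual proof actually delivers is precisely $g(u_{\epsilon})\to g(t)$, which is all you use; and, like the theorem statement itself, you quietly need $g(u_{\epsilon})$ to stay in the domain $(0,\infty)$ of $f$, which is a hypothesis the paper leaves implicit. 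Your closing observation that the result is immediate from part (5) of Theorem~\ref{teo2} when $g$ is ordinarily differentiable is also correct and correctly identified as a strictly weaker statement.
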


\begin{definition}\label{def9} {\rm($\mathcal{V}$-fractional integral)} Let $a\geq 0$ and $t\geq a$. Also, let $f$ be a function defined on $(a,t]$ and $0<\alpha<1$. Then, the $\mathcal{V}$-fractional integral of $f$ of order $\alpha$ is defined by
\begin{equation}\label{A15}
_{a}^{\rho }\mathcal{I}_{\gamma ,\beta ,\alpha }^{\delta ,p,q}f\left( t\right) :=\frac{\Gamma \left( \gamma +\beta \right) \left( \delta \right) _{p}}{\Gamma\left( \beta \right) \left( \rho \right) _{q}}\int_{a}^{t}\frac{f\left(x\right) }{x^{1-\alpha }}dx,
\end{equation}
with $\gamma ,\beta ,\rho ,\delta \in \mathbb{C}$ and $p,q>0$ such that ${Re}\left( \gamma \right) >0$, ${Re}\left( \beta \right) >0$, ${Re}\left( \rho \right) >0$, ${Re}\left( \delta \right) >0$ and ${Re}\left( \gamma \right) +p\geq q$.
\end{definition}

\begin{remark}\label{fe} In order to simplify notation, in this work, the $\mathcal{V}$-fractional integral of order $\alpha$, will be denoted by
\begin{equation*}
\frac{\Gamma \left( \gamma +\beta \right) \left( \delta \right) _{p}}{\Gamma
\left( \beta \right) \left( \rho \right) _{q}}\int_{a}^{b}\frac{f\left(
t\right) }{t^{1-\alpha }}dt=\int_{a}^{b}f\left( t\right) d_{\omega }t
\end{equation*}
where, $d_{\omega }t=\displaystyle\frac{\Gamma \left( \gamma +\beta \right) \left( \delta \right)_{p}}{\Gamma \left( \beta \right) \left( \rho \right) _{q}}t^{\alpha -1}dt$.
\end{remark}

%%%%%%%%%%%%%%%%%%%%%%%%%%%%%%%%%%%%%%%%%%%%%%%%%%%%%%%%%%%%%%%%%%%%%%%%%%%%%%%%%%%%%%%%%%%%%%%%%%%%%%%%%%%%%%%%%%%%%%%%%%%%%%%%%%%%%%%%%%%%%%%%%%%%%%%%
\section{$\mathcal{V}$-fractional derivative of a vector valued function}

In this section, we present our main result, the truncated $\mathcal{V}$-fractional derivative in $\mathbb{R}^n$ and check its continuity as well as the uniqueness of linear transformation. We present the definition of the truncated $\mathcal{V}$-fractional Jacobian matrix, the chain rule and the theorem that refers to linearity and product. We conclude the section discussing some examples.

\begin{definition}\label{AL} Let $f$ be a vector valued function with $n$ real variables such that $f(x_{1}, x_{2},...,x_{n})=(f_{1}(x_{1}, x_{2},...,x_{n}), f_{2}(x_{1}, x_{2},...,x_{n}),...,f_{m}(x_{1}, x_{2},...,x_{n}))$. We say that $f$ is  $\alpha$-differentiable at $a=(a_{1},...,a_{n})\in\mathbb{R}^n$ where each $a_{i}>0$, if there is a linear transformation $L:\mathbb{R}^n \rightarrow \mathbb{R}^m$ such that
\begin{equation}
\underset{\varepsilon \rightarrow 0}{\lim }\frac{\left\Vert f\left(
a_{1}\,_{i}H_{\gamma ,\beta ,p}^{\rho ,\delta ,q}\left( \varepsilon
_{1}a_{1}^{-\alpha }\right) ,...,a_{n}\,_{i}H_{\gamma ,\beta ,p}^{\rho ,\delta
,q}\left( \varepsilon _{n}a_{n}^{-\alpha }\right) \right) -f\left(
a_{1},...,a_{n}\right) -L\left( \varepsilon \right) \right\Vert }{\left\Vert
\varepsilon \right\Vert }=0,
\end{equation}
where $\varepsilon =\left( \varepsilon _{1},...,\varepsilon _{n}\right) $, $0<\alpha \leq 1$, $_{i}H_{\gamma ,\beta ,p}^{\rho ,\delta ,q}\left( \cdot \right) $ is the truncated function and $\rho,\delta,\gamma,\beta\in\mathbb{C}$, $p,q>0$ with, $Re(\rho)>0$, $Re(\delta)>0$, $Re(\gamma)>0$, $Re(\beta)>0$ and $Re(\gamma)+p\geq q$. The linear transformation is denoted by $_{i}^{\rho }\mathbb{V}^{\delta,p,q}_{\gamma,\beta,\alpha}f(a)$ and called the multivariable truncated $\mathcal{V}$-fractional derivative of $f$ of order $\alpha$ at $a$. 
\end{definition}

\begin{remark} Taking $m=n=1$ in {\rm Definition \ref{AL}}, we have 
\begin{equation}\label{AL1}
L\left( \varepsilon \right) =f\left( a\,_{i}H_{\gamma ,\beta ,p}^{\rho ,\delta
,q}\left( \varepsilon a^{-\alpha }\right) \right) -f\left( a\right) -r\left(
\varepsilon \right).
\end{equation}

Dividing by $\varepsilon$ both sides of {\rm Eq. \ref{AL1}} and taking the limit $\varepsilon \rightarrow 0$, we have
\begin{eqnarray}
\underset{\varepsilon \rightarrow 0}{\lim }\frac{L\left( \varepsilon \right) 
}{\varepsilon } &=&\underset{\varepsilon \rightarrow 0}{\lim }\frac{f\left(
a\,_{i}H_{\gamma ,\beta ,p}^{\rho ,\delta ,q}\left( \varepsilon a^{-\alpha
}\right) \right) -f\left( a\right) -r\left( \varepsilon \right) }{%
\varepsilon }  \notag \\
&=&\underset{\varepsilon \rightarrow 0}{\lim }\frac{f\left( a\,_{i}H_{\gamma
,\beta ,p}^{\rho ,\delta ,q}\left( \varepsilon a^{-\alpha }\right) \right)
-f\left( a\right) }{\varepsilon }  \notag \\
&=&\, _{i}^{\rho }\mathcal{V}_{\gamma ,\beta ,\alpha }^{\delta ,p,q}f\left(
a\right), 
\end{eqnarray}
where $\underset{\varepsilon \rightarrow 0}{\lim }\displaystyle\frac{r\left( \varepsilon \right) 
}{\varepsilon }=0$.
Thus, we conclude that, {\rm Definition \ref{AL}} is equivalent to {\rm Definition \ref{def7}}.
\end{remark}

\begin{theorem} Let $f$ be a vector valued function with $n$ variables. If $f$ is $\alpha$-differentiable at $a=(a_{1},...,a_{n})\in\mathbb{R}^n$ with $a_{i}>0$, then there is a unique linear transformations $L: \mathbb{R}^n \rightarrow \mathbb{R}^m$ such that
\begin{equation}
\underset{\varepsilon \rightarrow 0}{\lim }\frac{\left\Vert f\left(
a_{1}\,_{i}H_{\gamma ,\beta ,p}^{\rho ,\delta ,q}\left( \varepsilon
_{1}a_{1}^{-\alpha }\right) ,...,a_{n}\,_{i}H_{\gamma ,\beta ,p}^{\rho ,\delta
,q}\left( \varepsilon _{n}a_{n}^{-\alpha }\right) \right) -f\left(
a_{1},...,a_{n}\right) -L\left( \varepsilon \right) \right\Vert }{\left\Vert
\varepsilon \right\Vert }=0,
\end{equation}
with $0<\alpha \leq 1$, $_{i}H_{\gamma ,\beta ,p}^{\rho ,\delta ,q}\left( \cdot \right) $ is the truncated function and $\rho,\delta,\gamma,\beta\in\mathbb{C}$, $p,q>0$ such that, $Re(\rho)>0$, $Re(\delta)>0$, $Re(\gamma)>0$, $Re(\beta)>0$ and $Re(\gamma)+p\geq q$.
\end{theorem}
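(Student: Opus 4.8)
The plan is to prove uniqueness, since the existence of at least one such $L$ is exactly the hypothesis that $f$ is $\alpha$-differentiable at $a$ (Definition \ref{AL}). The argument mirrors the classical proof that the total derivative of a multivariable function is unique, and the role of the truncated Mittag-Leffler factors is only to define the perturbed argument, not to enter the uniqueness step itself.

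First I would suppose that two linear transformations $L_1,L_2:\mathbb{R}^n\rightarrow\mathbb{R}^m$ both satisfy the defining limit. To lighten notation, write
\[
\Phi(\varepsilon)=\left(a_{1}\,_{i}H_{\gamma ,\beta ,p}^{\rho ,\delta ,q}\left(\varepsilon_{1}a_{1}^{-\alpha}\right),\ldots,a_{n}\,_{i}H_{\gamma ,\beta ,p}^{\rho ,\delta ,q}\left(\varepsilon_{n}a_{n}^{-\alpha}\right)\right)
\]
for the perturbed argument. The key observation is that the terms $f(\Phi(\varepsilon))-f(a)$ are common to both hypotheses, so they cancel upon subtraction. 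Concretely, by the triangle inequality,
\[
\frac{\|L_1(\varepsilon)-L_2(\varepsilon)\|}{\|\varepsilon\|}\leq\frac{\|f(\Phi(\varepsilon))-f(a)-L_1(\varepsilon)\|}{\|\varepsilon\|}+\frac{\|f(\Phi(\varepsilon))-f(a)-L_2(\varepsilon)\|}{\|\varepsilon\|},
\]
and both terms on the right tend to $0$ as $\varepsilon\rightarrow 0$ by assumption. Hence $\lim_{\varepsilon\rightarrow 0}\|(L_1-L_2)(\varepsilon)\|/\|\varepsilon\|=0$.

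Next I would exploit the homogeneity of the linear map $L:=L_1-L_2$. Fixing an arbitrary nonzero $u\in\mathbb{R}^n$ and restricting the limit to the line $\varepsilon=tu$ with the scalar $t\rightarrow 0$, linearity gives $L(tu)=tL(u)$, so that $\|L(tu)\|/\|tu\|=\|L(u)\|/\|u\|$ is independent of $t$. Since the full limit is $0$, this constant ratio must itself equal $0$, forcing $L(u)=0$. As $u$ was arbitrary, $L\equiv 0$, i.e. $L_1=L_2$, which establishes uniqueness.

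I do not expect a genuine obstacle here: the cancellation of the $f$-dependent terms reduces the problem to a pure statement about linear maps, so the truncated function never intervenes. The only point requiring mild care is the passage to the directional (one-dimensional) sublimit, which is legitimate precisely because the hypothesized limit exists in the full $\mathbb{R}^n$ sense and therefore agrees with the limit along every line through the origin.
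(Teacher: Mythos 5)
Your proposal is correct and follows essentially the same route as the paper's own proof: the triangle inequality cancels the common terms $f(\Phi(\varepsilon))-f(a)$, and then the difference of the two candidate linear maps is shown to vanish by restricting the limit to scalar multiples $\varepsilon = tu$ of a fixed nonzero direction and using homogeneity. Your remark that the directional sublimit is legitimate because the full limit exists is a point the paper leaves implicit, but the argument is identical in substance.
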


\begin{proof} Let $M:\mathbb{R}^n \rightarrow \mathbb{R}^m$ such that
\begin{equation*}
\underset{\varepsilon \rightarrow 0}{\lim }\frac{\left\Vert f\left(
a_{1}\,_{i}H_{\gamma ,\beta ,p}^{\rho ,\delta ,q}\left( \varepsilon
_{1}a_{1}^{-\alpha }\right) ,...,a_{n}\,_{i}H_{\gamma ,\beta ,p}^{\rho ,\delta
,q}\left( \varepsilon _{n}a_{n}^{-\alpha }\right) \right) -f\left(
a_{1},...,a_{n}\right) -M\left( \varepsilon \right) \right\Vert }{\left\Vert
\varepsilon \right\Vert }=0.
\end{equation*}

Hence,
\begin{eqnarray*}
&&\underset{\varepsilon \rightarrow 0}{\lim }\frac{\left\Vert L(\varepsilon
)-M(\varepsilon )\right\Vert }{\left\Vert \varepsilon \right\Vert }  \notag
\\
&\leq &\underset{\varepsilon \rightarrow 0}{\lim }\frac{\left\Vert
L(\varepsilon )-f\left( a_{1}\,_{i}H_{\gamma ,\beta ,p}^{\rho ,\delta
,q}\left( \varepsilon _{1}a_{1}^{-\alpha }\right) ,...,a_{n}\,_{i}H_{\gamma
,\beta ,p}^{\rho ,\delta ,q}\left( \varepsilon _{n}a_{n}^{-\alpha }\right)
\right) +f(a)\right\Vert }{\left\Vert \varepsilon \right\Vert }  \notag \\
&+&\underset{\varepsilon \rightarrow 0}{\lim }\frac{\left\Vert f\left(
a_{1}\,_{i}H_{\gamma ,\beta ,p}^{\rho ,\delta ,q}\left( \varepsilon
_{1}a_{1}^{-\alpha }\right) ,...,a_{n}\,_{i}H_{\gamma ,\beta ,p}^{\rho
,\delta ,q}\left( \varepsilon _{n}a_{n}^{-\alpha }\right) \right)
-f(a)-M\left( \varepsilon \right) \right\Vert }{\left\Vert \varepsilon
\right\Vert }=0,
\end{eqnarray*}
then
\begin{eqnarray*}
\underset{\varepsilon \rightarrow 0}{\lim }\frac{\left\Vert L(\varepsilon)-M(\varepsilon) \right\Vert }
{\left\Vert\varepsilon \right\Vert } &\leq& 0.
\end{eqnarray*}
If $x\in\mathbb{R}^n$, then $\varepsilon x\rightarrow 0$ as $\varepsilon\rightarrow 0$. Hence, for $x\neq0$ we have
\begin{eqnarray*}
0 =\underset{\varepsilon \rightarrow 0}{\lim }\frac{\left\Vert L(\varepsilon x)-M(\varepsilon x) \right\Vert }
{\left\Vert\varepsilon x \right\Vert }=\frac{\left\Vert L(x)-M(x) \right\Vert }{\left\Vert x \right\Vert }.
\end{eqnarray*}
Therefore $L(x)=M(x)$. We conclude that, $L$ is unique.
\end{proof}

\begin{example} Let us consider the function $f$ defined by $f(x,y)=sin(x)$ and the point $(a,b)\in\mathbb{R}^2$ such that $a,b>0$, then $_{i}^{\rho }\mathbb{V}^{\delta,p,q}_{\gamma,\beta,\alpha}f(a,b)=L$ satisfies $L(x,y)=\displaystyle\frac{\Gamma(\beta) (\rho)_{q}}{\Gamma(\gamma+\beta)(\delta)_{p}} x a^{1-\alpha} cos(a)$.

To prove this, we note that
\begin{eqnarray*}
&&\underset{\left( \varepsilon _{1},\varepsilon _{2}\right) \rightarrow
\left( 0,0\right) }{\lim }\displaystyle\frac{\left\vert f\left( a\,_{i}H_{\gamma ,\beta
,p}^{\rho ,\delta ,q}\left( \varepsilon _{1}a^{-\alpha }\right)
,b\,_{i}H_{\gamma ,\beta ,p}^{\rho ,\delta ,q}\left( \varepsilon
_{2}b^{-\alpha }\right) \right) -f\left( a,b\right) -L\left( \varepsilon
_{1},\varepsilon _{2}\right) \right\vert }{\left\Vert \left( \varepsilon
_{1},\varepsilon _{2}\right) \right\Vert }  \notag \\
&=&\underset{\left( \varepsilon _{1},\varepsilon _{2}\right) \rightarrow
\left( 0,0\right) }{\lim }\displaystyle\frac{\left\vert \sin \left( a\,_{i}H_{\gamma
,\beta ,p}^{\rho ,\delta ,q}\left( \varepsilon _{1}a^{-\alpha }\right)
\right) -\sin \left( a\right) -L\left( \varepsilon _{1},\varepsilon
_{2}\right) \right\vert }{\sqrt{\varepsilon _{1}^{2}+\varepsilon _{2}^{2}}} 
\notag \\
&\leq &\underset{\varepsilon _{1}\rightarrow 0}{\lim }\frac{\left\vert \sin
\left( a\,_{i}H_{\gamma ,\beta ,p}^{\rho ,\delta ,q}\left( \varepsilon
_{1}a^{-\alpha }\right) \right) -\sin \left( a\right) -\frac{\Gamma
\left( \beta \right) \left( \rho \right) _{q}}{\Gamma \left( \gamma +\beta
\right) \left( \delta \right) _{p}}\varepsilon _{1}a^{1-\alpha }\cos \left(
a\right) \right\vert }{\left\vert \varepsilon _{1}\right\vert }=0
\end{eqnarray*}
\end{example}

\begin{example} Let us consider the function $f$ defined by $f(x,y)=e^x$ and the point $(a,b)\in \mathbb{R}^2$ such that $a,b>0$, then $_{i}^{\rho }\mathbb{V}^{\delta,p,q}_{\gamma,\beta,\alpha}f(a,b)=L$ satisfies $L(x,y)=\displaystyle\frac{\Gamma(\beta) (\rho)_{q}}{\Gamma(\gamma+\beta)(\delta)_{p}} x a^{1-\alpha} e^a$.

To prove this, we note that
\begin{eqnarray*}
&&\underset{\left( \varepsilon _{1},\varepsilon _{2}\right) \rightarrow
\left( 0,0\right) }{\lim }\frac{\left\vert f\left( a\,_{i}H_{\gamma ,\beta
,p}^{\rho ,\delta ,q}\left( \varepsilon _{1}a^{-\alpha }\right)
,b\,_{i}H_{\gamma ,\beta ,p}^{\rho ,\delta ,q}\left( \varepsilon
_{2}b^{-\alpha }\right) \right) -f\left( a,b\right) -L\left( \varepsilon
_{1},\varepsilon _{2}\right) \right\vert }{\left\Vert \left( \varepsilon
_{1},\varepsilon _{2}\right) \right\Vert }  \notag \\
&=&\underset{\left( \varepsilon _{1},\varepsilon _{2}\right) \rightarrow
\left( 0,0\right) }{\lim }\frac{\left\vert e^{a\,_{i}H_{\gamma ,\beta
,p}^{\rho ,\delta ,q}\left( \varepsilon _{1}a^{-\alpha }\right)
}-e^{a}-L\left( \varepsilon _{1},\varepsilon _{2}\right) \right\vert }{\sqrt{%
\varepsilon _{1}^{2}+\varepsilon _{2}^{2}}}  \notag \\
&\leq &\left\vert \underset{\varepsilon _{1}\rightarrow 0}{\lim }\frac{%
e^{a\,_{i}H_{\gamma ,\beta ,p}^{\rho ,\delta ,q}\left( \varepsilon
_{1}a^{-\alpha }\right) }-e^{a}}{\varepsilon _{1}}-\frac{\Gamma \left(
\beta \right) \left( \rho \right) _{q}}{\Gamma \left( \gamma +\beta \right)
\left( \delta \right) _{p}}a^{1-\alpha }e^{a}\right\vert =0.
\end{eqnarray*}
\end{example}

\begin{definition} Consider the matrix of the linear transformation $_{i}^{\rho }\mathbb{V}^{\delta,p,q}_{\gamma,\beta,\alpha}f(a): \mathbb{R}^n \rightarrow \mathbb{R}^m$ with respect to the usual base of $\mathbb{R}^n$ and $\mathbb{R}^m$. This $m \times n$ matrix is called the truncated $\mathcal{V}$-fractional Jacobian matrix of $f$ at $a$, and denoted by $^{\rho }J_{\gamma ,\beta ,\alpha }^{\delta ,p,q}f\left( a\right) $, where $\rho,\delta,\gamma,\beta\in\mathbb{C}$, $p,q>0$ with, $Re(\rho)>0$, $Re(\delta)>0$, $Re(\gamma)>0$, $Re(\beta)>0$ and $Re(\gamma)+p\geq q$.
\end{definition}

\begin{example} If $f(x,y)=\sin (x)$, then we have the matrix
\begin{equation*}
^{\rho }J_{\gamma ,\beta ,\alpha }^{\delta ,p,q}f\left( a,b\right) =\left[ 
\begin{array}{ccc}
\displaystyle\frac{\Gamma \left( \beta \right) \left( \rho \right) _{q}}{\Gamma \left(
\gamma +\beta \right) \left( \delta \right) _{p}}a^{1-\alpha }\cos \left(
a\right)  &  & 0%
\end{array}
\right].
\end{equation*}
\end{example}

\begin{theorem} If a vector function $f$ with $n$ variables is $\alpha$-differentiable at $a=(a_{1},a_{2},...,a_{n})\in\mathbb{R}^n$, with $a_{i}>0$, then $f$ is continuous at $a\in\mathbb{R}^n$.
\end{theorem}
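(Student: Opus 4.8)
The plan is to mirror the one–dimensional argument of Theorem~\ref{teo1}, extracting continuity from the defining limit of Definition~\ref{AL} together with two elementary facts: that a linear map on a finite–dimensional space is continuous, and that the truncated function satisfies $_{i}H_{\gamma ,\beta ,p}^{\rho ,\delta ,q}(0)=1$. For $\varepsilon=(\varepsilon_{1},\dots,\varepsilon_{n})$ near the origin, set
\begin{equation*}
r(\varepsilon):=f\!\left(a_{1}\,_{i}H_{\gamma ,\beta ,p}^{\rho ,\delta ,q}(\varepsilon_{1}a_{1}^{-\alpha}),\dots,a_{n}\,_{i}H_{\gamma ,\beta ,p}^{\rho ,\delta ,q}(\varepsilon_{n}a_{n}^{-\alpha})\right)-f(a)-L(\varepsilon),
\end{equation*}
where $L={}_{i}^{\rho}\mathbb{V}^{\delta,p,q}_{\gamma,\beta,\alpha}f(a)$ is the linear transformation furnished by $\alpha$–differentiability. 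By hypothesis $\|r(\varepsilon)\|/\|\varepsilon\|\to 0$ as $\varepsilon\to 0$.

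First I would show the remainder vanishes outright: writing $\|r(\varepsilon)\|=(\|r(\varepsilon)\|/\|\varepsilon\|)\,\|\varepsilon\|$ exhibits it as a product of a factor tending to $0$ and the factor $\|\varepsilon\|\to 0$, so $\|r(\varepsilon)\|\to 0$. Since $L:\mathbb{R}^{n}\to\mathbb{R}^{m}$ is linear between finite–dimensional spaces it is continuous, whence $L(\varepsilon)\to L(0)=0$. Rearranging the definition of $r$ into
\begin{equation*}
f\!\left(a_{1}\,_{i}H_{\gamma ,\beta ,p}^{\rho ,\delta ,q}(\varepsilon_{1}a_{1}^{-\alpha}),\dots,a_{n}\,_{i}H_{\gamma ,\beta ,p}^{\rho ,\delta ,q}(\varepsilon_{n}a_{n}^{-\alpha})\right)=f(a)+L(\varepsilon)+r(\varepsilon)
\end{equation*}
then shows that the left–hand side converges to $f(a)$ as $\varepsilon\to 0$.

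Next I would verify that the perturbed point approaches $a$. Evaluating Eq.~\eqref{A10} at $z=0$ leaves only the $k=0$ term, giving $_{i}H_{\gamma ,\beta ,p}^{\rho ,\delta ,q}(0)=\Gamma(\beta)\cdot 1/\Gamma(\beta)=1$; moreover each truncated function is a polynomial in its argument, hence continuous. Therefore $\varepsilon_{j}\to 0$ forces $a_{j}\,_{i}H_{\gamma ,\beta ,p}^{\rho ,\delta ,q}(\varepsilon_{j}a_{j}^{-\alpha})\to a_{j}$ for each $j$, i.e.\ the perturbed argument tends to $a$. Combined with the previous paragraph, this says precisely that $f$ evaluated at a point tending to $a$ has limit $f(a)$, which is the desired continuity.

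The step I expect to be the main obstacle is this last identification: one must argue that convergence of $f$ along the special family $\varepsilon\mapsto(a_{1}\,_{i}H_{\gamma ,\beta ,p}^{\rho ,\delta ,q}(\varepsilon_{1}a_{1}^{-\alpha}),\dots)$ is genuinely equivalent to ordinary continuity at $a$, and not merely continuity along these particular approach curves. This is secured by noting that the Jacobian of the perturbation map at $\varepsilon=0$ is diagonal with entries $a_{j}^{1-\alpha}\,\Gamma(\beta)(\rho)_{q}/[\Gamma(\gamma+\beta)(\delta)_{p}]\neq 0$, so the map is a local diffeomorphism onto a neighborhood of $a$; thus the perturbed points fill a full neighborhood and the two notions of limit coincide. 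The remaining manipulations—bounding $r$ and invoking continuity of $L$—are routine.
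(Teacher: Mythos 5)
Your proof is correct, and its core is the same as the paper's: both arguments split
\begin{equation*}
f\!\left(a_{1}\,_{i}H_{\gamma ,\beta ,p}^{\rho ,\delta ,q}(\varepsilon_{1}a_{1}^{-\alpha}),\dots\right)-f(a)
\end{equation*}
into the remainder term (killed by writing it as $\tfrac{\Vert r(\varepsilon)\Vert}{\Vert\varepsilon\Vert}\cdot\Vert\varepsilon\Vert$) plus $L(\varepsilon)$ (killed by continuity of the linear map), concluding that $f$ evaluated at the perturbed point tends to $f(a)$. The genuine difference is your final paragraph. The paper stops at that point: it substitutes $u=\varepsilon a^{-\alpha}$, observes the limit is $0$, and declares $f$ continuous, which only establishes convergence of $f$ along the particular family of approach points $\varepsilon\mapsto\bigl(a_{1}\,_{i}H_{\gamma ,\beta ,p}^{\rho ,\delta ,q}(\varepsilon_{1}a_{1}^{-\alpha}),\dots,a_{n}\,_{i}H_{\gamma ,\beta ,p}^{\rho ,\delta ,q}(\varepsilon_{n}a_{n}^{-\alpha})\bigr)$. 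You close exactly this gap by noting that the perturbation map is polynomial with $_{i}H_{\gamma ,\beta ,p}^{\rho ,\delta ,q}(0)=1$ and has an invertible (diagonal, nonzero-entry) Jacobian at $\varepsilon=0$, hence is a local diffeomorphism onto a neighborhood of $a$, so convergence along this family is equivalent to ordinary continuity at $a$. That step is absent from the paper and is needed if ``continuous'' is meant in the standard sense (it is harmless, though implicitly assumed, if the paper intends continuity only in the weaker $\mathcal{V}$-fractional sense of Theorem \ref{teo1}). In short: same estimate, but your version is the complete one.
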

\begin{proof} Note that, 
\begin{eqnarray}
&&\left\Vert f\left( a_{1}\,_{i}H_{\gamma ,\beta ,p}^{\rho ,\delta ,q}\left(
\varepsilon _{1}a_{1}^{-\alpha }\right) ,...,a_{n}\,_{i}H_{\gamma ,\beta
,p}^{\rho ,\delta ,q}\left( \varepsilon _{n}a_{n}^{-\alpha }\right) \right)
-f(a_{1},...,a_{n})\right\Vert   \notag  \label{J1} \\
&\leq &\frac{\left\Vert f\left( a_{1}\,_{i}H_{\gamma ,\beta ,p}^{\rho
,\delta ,q}\left( \varepsilon _{1}a_{1}^{-\alpha }\right)
,...,a_{n}\,_{i}H_{\gamma ,\beta ,p}^{\rho ,\delta ,q}\left( \varepsilon
_{n}a_{n}^{-\alpha }\right) \right) -f(a_{1},...,a_{n})-L\left( \varepsilon
\right) \right\Vert \left\Vert \varepsilon \right\Vert }{\left\Vert
\varepsilon \right\Vert }  \notag \\
&&+\left\Vert L\left( \varepsilon \right) \right\Vert .
\end{eqnarray}

Taking the limit $\varepsilon \rightarrow 0$ in both sides of the {\rm Eq.(\ref{J1})}, we have
\begin{eqnarray*}
&&\underset{\varepsilon \rightarrow 0}{\lim }\left\Vert f\left(
a_{1}\,_{i}H_{\gamma ,\beta ,p}^{\rho ,\delta ,q}\left( \varepsilon
_{1}a_{1}^{-\alpha }\right) ,...,a_{n}\,_{i}H_{\gamma ,\beta ,p}^{\rho
,\delta ,q}\left( \varepsilon _{n}a_{n}^{-\alpha }\right) \right)
-f(a_{1},...,a_{n})\right\Vert  \\
&\leq &\underset{\varepsilon \rightarrow 0}{\lim }\frac{\left\Vert f\left(
a_{1}\,_{i}H_{\gamma ,\beta ,p}^{\rho ,\delta ,q}\left( \varepsilon
_{1}a_{1}^{-\alpha }\right) ,...,a_{n}\,_{i}H_{\gamma ,\beta ,p}^{\rho
,\delta ,q}\left( \varepsilon _{n}a_{n}^{-\alpha }\right) \right)
-f(a_{1},...,a_{n})-L\left( \varepsilon \right) \right\Vert }{\left\Vert
\varepsilon \right\Vert } \\
&&\times \underset{\varepsilon \rightarrow 0}{\lim }\left\Vert \varepsilon
\right\Vert +\underset{\varepsilon \rightarrow 0}{\lim }\left\Vert L\left(
\varepsilon \right) \right\Vert .
\end{eqnarray*}

Let $(u_{1},...,u_{n})=(\varepsilon_{1}a_{1}^{-\alpha},...,\varepsilon_{n}a_{n}^{-\alpha})$, then $u\rightarrow 0$ as $\varepsilon\rightarrow 0$. Since
\begin{equation*}
\underset{\varepsilon \rightarrow 0}{\lim }\left\Vert f\left(
a\,_{i}H_{\gamma ,\beta ,p}^{\rho ,\delta ,q}\left( u\right) \right)
-f(a)\right\Vert \leq 0,
\end{equation*}
we have,
\begin{equation*}
\underset{\varepsilon \rightarrow 0}{\lim }\left\Vert f\left(
a\,_{i}H_{\gamma ,\beta ,p}^{\rho ,\delta ,q}\left( u\right) \right)
-f(a)\right\Vert =0.
\end{equation*}

Hence, $f$ is continuous at $a\in\mathbb{R}^n$.
\end{proof}

\begin{theorem}\label{AR} {\rm\text{(Chain rule)}} Let $x\in\mathbb{R}^n$, $y\in\mathbb{R}^m$. If $f(x)=(f_{1}(x),...,f_{m}(x))$ is $\alpha$-differentiable at $a=(a_{1},...,a_{n})\in\mathbb{R}^n$, with $a_{i}>0$ such that $\alpha\in (0,1]$, and $g(y)=(g_{1}(y),...,g_{p}(y))$ is $\alpha$-differentiable at $f(a)\in\mathbb{R}^m$, with $f_{i}(a)>0$ such that $\alpha\in (0,1]$, then the composition $g\circ f$ is $\alpha$-differentiable at $a$ and
\begin{equation}
_{i}^{\rho }\mathbb{V}_{\gamma ,\beta ,\alpha }^{\delta ,p,q}\left( g\circ
f\right) \left( a\right) =g^{\prime }\left( f\left( a\right) \right)
\,_{i}^{\rho }\mathbb{V}_{\gamma ,\beta ,\alpha }^{\delta ,p,q}f\left(
a\right) ,
\end{equation}
for $g$ differentiable in $f(a)$ and $\rho,\delta,\gamma,\beta\in\mathbb{C}$, $p,q>0$ such that, $Re(\rho)>0$, $Re(\delta)>0$, $Re(\gamma)>0$, $Re(\beta)>0$ and $Re(\gamma)+p\geq q$.
\end{theorem}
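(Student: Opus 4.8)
The plan is to reduce the statement to the classical Fr\'echet chain rule, exploiting the structural fact that the inner map $f$ enters through its $\mathcal{V}$-fractional derivative while the outer map $g$, being assumed ordinarily differentiable at $f(a)$, enters through its ordinary Jacobian $g'(f(a))$. Write $a^{H}(\varepsilon)$ for the multiplicatively perturbed point whose $j$-th coordinate is $a_{j}\,{}_{i}H_{\gamma,\beta,p}^{\rho,\delta,q}(\varepsilon_{j}a_{j}^{-\alpha})$, and set $A={}_{i}^{\rho}\mathbb{V}_{\gamma,\beta,\alpha}^{\delta,p,q}f(a)$ and $B=g'(f(a))$. By Definition \ref{AL} applied to $f$, its $\alpha$-differentiability at $a$ yields the first-order expansion $f(a^{H}(\varepsilon))=f(a)+A\varepsilon+R(\varepsilon)$ with $\|R(\varepsilon)\|/\|\varepsilon\|\to 0$, and ordinary differentiability of $g$ at $f(a)$ gives $g(f(a)+k)=g(f(a))+Bk+S(k)$ with $\|S(k)\|/\|k\|\to 0$ as $k\to 0$.

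First I would substitute the increment $k(\varepsilon):=f(a^{H}(\varepsilon))-f(a)=A\varepsilon+R(\varepsilon)$ into the expansion of $g$, obtaining
\[
(g\circ f)(a^{H}(\varepsilon))-(g\circ f)(a)=B\,A\,\varepsilon+B\,R(\varepsilon)+S\bigl(k(\varepsilon)\bigr).
\]
This already singles out the candidate linear transformation $L=BA=g'(f(a))\,{}_{i}^{\rho}\mathbb{V}_{\gamma,\beta,\alpha}^{\delta,p,q}f(a)$, which is exactly the claimed formula, so what remains is to verify that the two remainder contributions are $o(\|\varepsilon\|)$. The term $B\,R(\varepsilon)$ is harmless, since $B$ is a bounded linear map on a finite-dimensional space and hence $\|B\,R(\varepsilon)\|/\|\varepsilon\|\le\|B\|\,\|R(\varepsilon)\|/\|\varepsilon\|\to 0$.

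The delicate step, and the one I expect to be the main obstacle, is controlling $S(k(\varepsilon))$, because $S$ is only known to be small relative to its own argument $k$, not relative to $\varepsilon$. On the set where $k(\varepsilon)\neq 0$ I would factor
\[
\frac{\|S(k(\varepsilon))\|}{\|\varepsilon\|}=\frac{\|S(k(\varepsilon))\|}{\|k(\varepsilon)\|}\cdot\frac{\|k(\varepsilon)\|}{\|\varepsilon\|}.
\]
The second factor stays bounded near the origin, since $\|k(\varepsilon)\|/\|\varepsilon\|\le\|A\|+\|R(\varepsilon)\|/\|\varepsilon\|$; and because $k(\varepsilon)\to 0$ as $\varepsilon\to 0$ (immediate from the expansion $k(\varepsilon)=A\varepsilon+R(\varepsilon)$, consistent with the continuity of $f$ guaranteed by its $\alpha$-differentiability), the first factor tends to $0$ by the smallness of $S$. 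On the complementary set where $k(\varepsilon)=0$ one has $S(k(\varepsilon))=S(0)=0$, so that contribution vanishes identically; combining the two cases gives $\|S(k(\varepsilon))\|/\|\varepsilon\|\to 0$.

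Adding the two estimates shows that $L$ satisfies the defining limit of Definition \ref{AL} for $g\circ f$ at $a$, so $g\circ f$ is $\alpha$-differentiable there, and the uniqueness result established above identifies this $L$ as the multivariable truncated $\mathcal{V}$-fractional derivative of $g\circ f$, completing the proof. As a consistency check, specializing to $n=m=p=1$ recovers Theorem \ref{teo3}, and the appearance of the ordinary Jacobian $g'(f(a))$ rather than a fractional one is explained by Theorem \ref{teo2}(5), which expresses the $\mathcal{V}$-fractional derivative of a differentiable map as a scalar multiple of its classical derivative.
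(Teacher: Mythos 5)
Your proof is correct, and it follows the same overall skeleton as the paper's: the candidate linear map is the composition $M\circ L$ with $L={}_{i}^{\rho}\mathbb{V}_{\gamma,\beta,\alpha}^{\delta,p,q}f(a)$ and $M=g'(f(a))$, and the error of $g\circ f$ splits into $M$ applied to the remainder of $f$ plus the remainder of $g$ evaluated at the increment of $f$. The genuine difference is in how the outer remainder is parametrized. The paper routes it through the $\alpha$-differentiability of $g$ at $f(a)$, introducing a function $\psi$ whose arguments are the multiplicatively perturbed points $f_{j}(a)\,{}_{i}H_{\gamma,\beta,p}^{\rho,\delta,q}\bigl(k_{j}f_{j}(a)^{-\alpha}\bigr)$, and then substitutes $u_{j}=f_{j}(a^{H}(\varepsilon))-f_{j}(a)$, which is an \emph{additive} increment that does not actually match the multiplicative form appearing in $\psi$'s definition; the identification of the outer error with $\psi(\cdots)$ is therefore only heuristic at that point. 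You instead invoke the ordinary differentiability of $g$ at $f(a)$ (explicitly assumed in the statement, and the reason the classical Jacobian $g'(f(a))$ appears in the conclusion), writing $g(f(a)+k)=g(f(a))+Bk+S(k)$ with an additive increment; this matches the substitution $k(\varepsilon)=f(a^{H}(\varepsilon))-f(a)$ exactly and sidesteps the mismatch. You also supply the estimate the paper dismisses as ``obvious'': converting $\|S(k)\|=o(\|k\|)$ into $o(\|\varepsilon\|)$ requires the boundedness of $\|k(\varepsilon)\|/\|\varepsilon\|$ near the origin and a separate treatment of the set where $k(\varepsilon)=0$, both of which you spell out and the paper omits (the paper also cites its equation for the $\psi$-limit where it means the $M\circ\varphi$-limit in its closing line). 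Net effect: same decomposition, but your route is cleaner and is rigorous precisely at the two places where the paper's argument is thin.
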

\begin{proof}
Taking $ L=\,_{i}^{\rho }\mathbb{V}_{\gamma ,\beta ,\alpha }^{\delta,p,q}f\left( t\right) $ and $M=D g\left( f\left( a\right) \right) $, where $D$ is the derivative operator of integer order, we define,
\begin{eqnarray}\label{X1}
&&\varphi \left( a_{1}\,_{i}H_{\gamma ,\beta ,p}^{\rho ,\delta ,q}\left(
\varepsilon _{1}a_{1}^{-\alpha }\right) ,...,a_{n}\,_{i}H_{\gamma ,\beta
,p}^{\rho ,\delta ,q}\left( \varepsilon _{n}a_{n}^{-\alpha }\right) \right) 
\notag \\
&=&f\left( a_{1}\,_{i}H_{\gamma ,\beta ,p}^{\rho ,\delta ,q}\left(
\varepsilon _{1}a_{1}^{-\alpha }\right) ,...,a_{n}\,_{i}H_{\gamma ,\beta
,p}^{\rho ,\delta ,q}\left( \varepsilon _{n}a_{n}^{-\alpha }\right) \right)
-f(a)-L\left( \varepsilon \right),
\end{eqnarray}
\begin{eqnarray}\label{X2}
&&\psi \left( f_{1}\left( a\right) \,_{i}H_{\gamma ,\beta ,p}^{\rho ,\delta
,q}\left( k_{1}f_{1}\left( a\right) ^{-\alpha }\right) ,...,f_{n}\left(
a\right) \,_{i}H_{\gamma ,\beta ,p}^{\rho ,\delta ,q}\left( k_{n}f_{n}\left(
a\right) ^{-\alpha }\right) \right)   \notag \\
&=&g\left( f_{1}\left( a\right) \,_{i}H_{\gamma ,\beta ,p}^{\rho ,\delta
,q}\left( k_{1}f_{1}\left( a\right) ^{-\alpha }\right) ,...,f_{n}\left(
a\right) \,_{i}H_{\gamma ,\beta ,p}^{\rho ,\delta ,q}\left( k_{n}f_{n}\left(
a\right) ^{-\alpha }\right) \right) -g\left( f(a\right) )-M\left( k\right)\notag \\
\end{eqnarray}
and
\begin{eqnarray}\label{X3}
&&\rho \left( a_{1}\,_{i}H_{\gamma ,\beta ,p}^{\rho ,\delta ,q}\left(
\varepsilon _{1}a_{1}^{-\alpha }\right) ,...,a_{n}\,_{i}H_{\gamma ,\beta
,p}^{\rho ,\delta ,q}\left( \varepsilon _{n}a_{n}^{-\alpha }\right) \right)
=g\circ f\left( a_{1}\,_{i}H_{\gamma ,\beta ,p}^{\rho ,\delta ,q}\left(
\varepsilon _{1}a_{1}^{-\alpha }\right) ,...,a_{n}\,_{i}H_{\gamma ,\beta
,p}^{\rho ,\delta ,q}\left( \varepsilon _{n}a_{n}^{-\alpha }\right) \right) \notag
\\
&&-g\circ f(a)-M\circ L\left( \varepsilon \right). 
\end{eqnarray}

Hence, taking $\varepsilon \rightarrow 0$ and $k\rightarrow 0$ in both sides of $\rm {Eq.(\ref{X1})}$ and $\rm {Eq.(\ref{X2})}$, we have
\begin{eqnarray}\label{X4}
&&\underset{\varepsilon \rightarrow 0}{\lim }\frac{\left\Vert \varphi \left(
a_{1}\,_{i}H_{\gamma ,\beta ,p}^{\rho ,\delta ,q}\left( \varepsilon
_{1}a_{1}^{-\alpha }\right) ,...,a_{n}\,_{i}H_{\gamma ,\beta ,p}^{\rho
,\delta ,q}\left( \varepsilon _{n}a_{n}^{-\alpha }\right) \right)
\right\Vert }{\left\Vert \varepsilon \right\Vert }  \notag \\
&=&\lim \frac{\left\Vert f\left( a_{1}\,_{i}H_{\gamma ,\beta ,p}^{\rho
,\delta ,q}\left( \varepsilon _{1}a_{1}^{-\alpha }\right)
,...,a_{n}\,_{i}H_{\gamma ,\beta ,p}^{\rho ,\delta ,q}\left( \varepsilon
_{n}a_{n}^{-\alpha }\right) \right) -f(a)-L\left( \varepsilon \right)
\right\Vert }{\left\Vert \varepsilon \right\Vert }=0\notag \\
\end{eqnarray}
and
\begin{eqnarray}\label{X5}
&&\underset{k\rightarrow 0}{\lim }\frac{\left\Vert \psi \left( f_{1}\left(
a\right) \,_{i}H_{\gamma ,\beta ,p}^{\rho ,\delta ,q}\left( k_{1}f_{1}\left(
a\right) ^{-\alpha }\right) ,...,f_{n}\left( a\right) \,_{i}H_{\gamma ,\beta
,p}^{\rho ,\delta ,q}\left( k_{n}f_{n}\left( a\right) ^{-\alpha }\right)
\right) \right\Vert }{\left\Vert k\right\Vert }  \notag \\
&=&\underset{k\rightarrow 0}{\lim }\frac{\left\Vert g\left( f_{1}\left(
a\right) \,_{i}H_{\gamma ,\beta ,p}^{\rho ,\delta ,q}\left( k_{1}f_{1}\left(
a\right) ^{-\alpha }\right) ,...,f_{n}\left( a\right) \,_{i}H_{\gamma ,\beta
,p}^{\rho ,\delta ,q}\left( k_{n}f_{n}\left( a\right) ^{-\alpha }\right)
\right) -g\left( f(a\right) )-M\left( k\right) \right\Vert }{\left\Vert
k\right\Vert }=0. \notag \\
\end{eqnarray}

On the other hand, taking $\varepsilon \rightarrow 0$ and $k\rightarrow 0$ on both sides of $\rm{Eq.(\ref{X3})}$, we will show that
\begin{equation}\label{X6}
\underset{\varepsilon \rightarrow 0}{\lim }\frac{\left\Vert \rho \left(
a_{1}\,_{i}H_{\gamma ,\beta ,p}^{\rho ,\delta ,q}\left( \varepsilon
_{1}a_{1}^{-\alpha }\right) ,...,a_{n}\,_{i}H_{\gamma ,\beta ,p}^{\rho
,\delta ,q}\left( \varepsilon _{n}a_{n}^{-\alpha }\right) \right)
\right\Vert }{\left\Vert \varepsilon \right\Vert }=0.
\end{equation}

Now, let
\begin{eqnarray}\label{X7}
&&\rho \left( a_{1}\,_{i}H_{\gamma ,\beta ,p}^{\rho ,\delta ,q}\left(
\varepsilon _{1}a_{1}^{-\alpha }\right) ,...,a_{n}\,_{i}H_{\gamma ,\beta
,p}^{\rho ,\delta ,q}\left( \varepsilon _{n}a_{n}^{-\alpha }\right) \right) 
\notag \\
&=&g\left( f\left( a_{1}\,_{i}H_{\gamma ,\beta ,p}^{\rho ,\delta ,q}\left(
\varepsilon _{1}a_{1}^{-\alpha }\right) ,...,a_{n}\,_{i}H_{\gamma ,\beta
,p}^{\rho ,\delta ,q}\left( \varepsilon _{n}a_{n}^{-\alpha }\right) \right)
\right) -g\left( f(a)\right) -M\circ L\left( \varepsilon \right)   \notag \\
&=&g\left( 
\begin{array}{c}
f_{1}\left( a_{1}\,_{i}H_{\gamma ,\beta ,p}^{\rho ,\delta ,q}\left(
\varepsilon _{1}a_{1}^{-\alpha }\right) ,...,a_{n}\,_{i}H_{\gamma ,\beta
,p}^{\rho ,\delta ,q}\left( \varepsilon _{n}a_{n}^{-\alpha }\right) \right)
,... \\ 
...,f_{m}\left( a_{1}\,_{i}H_{\gamma ,\beta ,p}^{\rho ,\delta ,q}\left(
\varepsilon _{1}a_{1}^{-\alpha }\right) ,...,a_{n}\,_{i}H_{\gamma ,\beta
,p}^{\rho ,\delta ,q}\left( \varepsilon _{n}a_{n}^{-\alpha }\right) \right) 
\end{array}%
\right) -g\left( f(a)\right)   \notag \\
&&-M\left( 
\begin{array}{c}
f\left( a_{1}\,_{i}H_{\gamma ,\beta ,p}^{\rho ,\delta ,q}\left( \varepsilon
_{1}a_{1}^{-\alpha }\right) ,...,a_{n}\,_{i}H_{\gamma ,\beta ,p}^{\rho
,\delta ,q}\left( \varepsilon _{n}a_{n}^{-\alpha }\right) \right)  \\ 
-f\left( a\right) -\varphi \left( a_{1}\,_{i}H_{\gamma ,\beta ,p}^{\rho
,\delta ,q}\left( \varepsilon _{1}a_{1}^{-\alpha }\right)
,...,a_{n}\,_{i}H_{\gamma ,\beta ,p}^{\rho ,\delta ,q}\left( \varepsilon
_{n}a_{n}^{-\alpha }\right) \right) 
\end{array}%
\right)   \notag \\
&=&\left[ g\left( 
\begin{array}{c}
f_{1}\left( a_{1}\,_{i}H_{\gamma ,\beta ,p}^{\rho ,\delta ,q}\left(
\varepsilon _{1}a_{1}^{-\alpha }\right) ,...,a_{n}\,_{i}H_{\gamma ,\beta
,p}^{\rho ,\delta ,q}\left( \varepsilon _{n}a_{n}^{-\alpha }\right) \right)
,... \\ 
...,f_{m}\left( a_{1}\,_{i}H_{\gamma ,\beta ,p}^{\rho ,\delta ,q}\left(
\varepsilon _{1}a_{1}^{-\alpha }\right) ,...,a_{n}\,_{i}H_{\gamma ,\beta
,p}^{\rho ,\delta ,q}\left( \varepsilon _{n}a_{n}^{-\alpha }\right) \right) 
\end{array}%
\right) -g\left( f(a)\right) \right.   \notag \\
&&\left. -M\left( 
\begin{array}{c}
f_{1}\left( a_{1}\,_{i}H_{\gamma ,\beta ,p}^{\rho ,\delta ,q}\left(
\varepsilon _{1}a_{1}^{-\alpha }\right) ,...,a_{n}\,_{i}H_{\gamma ,\beta
,p}^{\rho ,\delta ,q}\left( \varepsilon _{n}a_{n}^{-\alpha }\right) \right)
-f_{1}\left( a\right) ,... \\ 
...,f_{m}\left( a_{1}\,_{i}H_{\gamma ,\beta ,p}^{\rho ,\delta ,q}\left(
\varepsilon _{1}a_{1}^{-\alpha }\right) ,...,a_{n}\,_{i}H_{\gamma ,\beta
,p}^{\rho ,\delta ,q}\left( \varepsilon _{n}a_{n}^{-\alpha }\right) \right)
-f_{m}\left( a\right) 
\end{array}%
\right) \right]   \notag \\
&&+M\left[ \varphi \left( a_{1}\,_{i}H_{\gamma ,\beta ,p}^{\rho ,\delta
,q}\left( \varepsilon _{1}a_{1}^{-\alpha }\right) ,...,a_{n}\,_{i}H_{\gamma
,\beta ,p}^{\rho ,\delta ,q}\left( \varepsilon _{n}a_{n}^{-\alpha }\right)
\right) \right] .
\end{eqnarray}

If we put $u_{j}=f_{j}\left( a_{1}\,_{i}H_{\gamma ,\beta ,p}^{\rho ,\delta
,q}\left( \varepsilon _{1}a_{1}^{-\alpha }\right) ,...,a_{n}\,_{i}H_{\gamma
,\beta ,p}^{\rho ,\delta ,q}\left( \varepsilon _{n}a_{n}^{-\alpha }\right)
\right) -f_{j}\left( a\right) $, with $j=1,2,...,m$, then we have
$f_{j}\left( a_{1}\,_{i}H_{\gamma ,\beta ,p}^{\rho ,\delta
,q}\left( \varepsilon _{1}a_{1}^{-\alpha }\right) ,...,a_{n}\,_{i}H_{\gamma
,\beta ,p}^{\rho ,\delta ,q}\left( \varepsilon _{n}a_{n}^{-\alpha }\right)
\right) =u_{j}+f_{j}\left( a\right), $ and $u\rightarrow 0$ as $\varepsilon\rightarrow 0$. Hence, using {\rm{Eq.(\ref{X3})}}, we have
\begin{eqnarray*}
&&\rho \left( a_{1}\,_{i}H_{\gamma ,\beta ,p}^{\rho ,\delta ,q}\left(
\varepsilon _{1}a_{1}^{-\alpha }\right) ,...,a_{n}\,_{i}H_{\gamma ,\beta
,p}^{\rho ,\delta ,q}\left( \varepsilon _{n}a_{n}^{-\alpha }\right) \right) 
\notag \\
&=&\left[ g\left( 
\begin{array}{c}
f_{1}\left( a_{1}\,_{i}H_{\gamma ,\beta ,p}^{\rho ,\delta ,q}\left(
\varepsilon _{1}a_{1}^{-\alpha }\right) ,...,a_{n}\,_{i}H_{\gamma ,\beta
,p}^{\rho ,\delta ,q}\left( \varepsilon _{n}a_{n}^{-\alpha }\right) \right)
,... \\ 
...,f_{m}\left( a_{1}\,_{i}H_{\gamma ,\beta ,p}^{\rho ,\delta ,q}\left(
\varepsilon _{1}a_{1}^{-\alpha }\right) ,...,a_{n}\,_{i}H_{\gamma ,\beta
,p}^{\rho ,\delta ,q}\left( \varepsilon _{n}a_{n}^{-\alpha }\right) \right) 
\end{array}
\right) -g\left( f(a)\right) -M\left( u\right) \right]   \notag \\
&&+M\left[ \varphi \left( a_{1}\,_{i}H_{\gamma ,\beta ,p}^{\rho ,\delta
,q}\left( \varepsilon _{1}a_{1}^{-\alpha }\right) ,...,a_{n}\,_{i}H_{\gamma
,\beta ,p}^{\rho ,\delta ,q}\left( \varepsilon _{n}a_{n}^{-\alpha }\right)
\right) \right]   \notag \\
&=&\psi \left( f_{1}\left( a\right) \,_{i}H_{\gamma ,\beta ,p}^{\rho ,\delta
,q}\left( u_{1}f\left( a\right) ^{-\alpha }\right) ,...,f_{m}\left( a\right)
\,_{i}H_{\gamma ,\beta ,p}^{\rho ,\delta ,q}\left( u_{m}f_{m}\left( a\right)
^{-\alpha }\right) \right)   \notag \\
&&+M\left[ \varphi \left( a_{1}\,_{i}H_{\gamma ,\beta ,p}^{\rho ,\delta
,q}\left( \varepsilon _{1}a_{1}^{-\alpha }\right) ,...,a_{n}\,_{i}H_{\gamma
,\beta ,p}^{\rho ,\delta ,q}\left( \varepsilon _{n}a_{n}^{-\alpha }\right)
\right) \right] .
\end{eqnarray*}

Thus we will show,
\begin{equation}\label{X8}
\underset{u\rightarrow 0}{\lim }\frac{\left\Vert \psi \left( f_{1}\left(
a\right) \,_{i}H_{\gamma ,\beta ,p}^{\rho ,\delta ,q}\left( u_{1}f\left(
a\right) ^{-\alpha }\right) ,...,f_{m}\left( a\right) \,_{i}H_{\gamma ,\beta
,p}^{\rho ,\delta ,q}\left( u_{m}f_{m}\left( a\right) ^{-\alpha }\right)
\right) \right\Vert }{\left\Vert u\right\Vert }=0
\end{equation}
and
\begin{equation}
\underset{\varepsilon \rightarrow 0}{\lim }\frac{\left\Vert M\left( \varphi
\left( a_{1}\,_{i}H_{\gamma ,\beta ,p}^{\rho ,\delta ,q}\left( \varepsilon
_{1}a_{1}^{-\alpha }\right) ,...,a_{n}\,_{i}H_{\gamma ,\beta ,p}^{\rho
,\delta ,q}\left( \varepsilon _{n}a_{n}^{-\alpha }\right) \right) \right)
\right\Vert }{\left\Vert \varepsilon \right\Vert }=0.  \label{X9}
\end{equation}

For {\rm{Eq.(\ref{X8})}}, it is obvious from of {\rm{Eq.(\ref{X5})}}. Now, for  {\rm{Eq.(\ref{X9})}}, we have
\begin{eqnarray}\label{X10}
&&\left\Vert M\left( \varphi \left( a_{1}\,_{i}H_{\gamma ,\beta ,p}^{\rho
,\delta ,q}\left( \varepsilon _{1}a_{1}^{-\alpha }\right)
,...,a_{n}\,_{i}H_{\gamma ,\beta ,p}^{\rho ,\delta ,q}\left( \varepsilon
_{n}a_{n}^{-\alpha }\right) \right) \right) \right\Vert   \notag \\
&\leq &\left\Vert M\right\Vert \left\Vert \left( \varphi \left(
a_{1}\,_{i}H_{\gamma ,\beta ,p}^{\rho ,\delta ,q}\left( \varepsilon
_{1}a_{1}^{-\alpha }\right) ,...,a_{n}\,_{i}H_{\gamma ,\beta ,p}^{\rho
,\delta ,q}\left( \varepsilon _{n}a_{n}^{-\alpha }\right) \right) \right)
\right\Vert   \notag \\
&\leq &K\left\Vert \left( \varphi \left( a_{1}\,_{i}H_{\gamma ,\beta
,p}^{\rho ,\delta ,q}\left( \varepsilon _{1}a_{1}^{-\alpha }\right)
,...,a_{n}\,_{i}H_{\gamma ,\beta ,p}^{\rho ,\delta ,q}\left( \varepsilon
_{n}a_{n}^{-\alpha }\right) \right) \right) \right\Vert,
\end{eqnarray}
such that $K>0$. Taking the limit $\varepsilon\rightarrow 0$ on both sides of {\rm{Eq.(\ref{X10})}} and using {\rm{Eq.(\ref{X4})}}, we get {\rm{Eq.(\ref{X8})}}. Hence, we conclude the proof.
\end{proof}

\begin{corollary}\label{AT} For $m=n=p=1$, the {\rm{Theorem (\ref{AR})}} states that
\begin{equation*}
_{i}^{\rho }\mathcal{V}_{\gamma ,\beta ,\alpha }^{\delta ,p,q}\left( g\circ f\right)
\left( a\right) =g^{\prime }\left( f\left( a\right) \right)\, _{i}^{\rho
}\mathcal{V}_{\gamma ,\beta ,\alpha }^{\delta ,p,q}f\left( a\right) .
\end{equation*}

\end{corollary}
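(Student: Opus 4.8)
The plan is to obtain this corollary as a direct specialization of the multivariable chain rule, Theorem \ref{AR}, rather than by an independent computation. First I would set $m=n=p=1$ throughout the hypotheses and conclusion of Theorem \ref{AR}. In this regime $f$ and $g$ are ordinary real-valued functions of a single real variable, $f,g:(0,\infty)\rightarrow\mathbb{R}$, and the two linear transformations appearing in that proof, namely $L={}_{i}^{\rho }\mathbb{V}_{\gamma ,\beta ,\alpha }^{\delta ,p,q}f(a)$ and $M=Dg(f(a))$, are now maps $\mathbb{R}\rightarrow\mathbb{R}$, i.e. multiplication by scalars. In particular $M$ is multiplication by the integer-order derivative $g^{\prime }(f(a))$, and the norms $\Vert\cdot\Vert$ in the defining limit collapse to absolute values $|\cdot|$.

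The second step is to invoke the Remark following Definition \ref{AL}: when $m=n=1$ the multivariable operator ${}_{i}^{\rho }\mathbb{V}_{\gamma ,\beta ,\alpha }^{\delta ,p,q}$ coincides with the single-variable operator ${}_{i}^{\rho }\mathcal{V}_{\gamma ,\beta ,\alpha }^{\delta ,p,q}$ of Definition \ref{def7}. Applying this identification to both $g\circ f$ and $f$ gives
\begin{equation*}
{}_{i}^{\rho }\mathbb{V}_{\gamma ,\beta ,\alpha }^{\delta ,p,q}(g\circ f)(a)={}_{i}^{\rho }\mathcal{V}_{\gamma ,\beta ,\alpha }^{\delta ,p,q}(g\circ f)(a),\qquad {}_{i}^{\rho }\mathbb{V}_{\gamma ,\beta ,\alpha }^{\delta ,p,q}f(a)={}_{i}^{\rho }\mathcal{V}_{\gamma ,\beta ,\alpha }^{\delta ,p,q}f(a).
\end{equation*}
Finally, the composition of linear maps $M\circ L$ that forms the right-hand side of the conclusion of Theorem \ref{AR} reduces, in dimension one, to the product of scalars $g^{\prime }(f(a))\cdot {}_{i}^{\rho }\mathcal{V}_{\gamma ,\beta ,\alpha }^{\delta ,p,q}f(a)$. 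Substituting these three identifications into the conclusion of Theorem \ref{AR} yields precisely the asserted formula, so the corollary follows.

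The only point requiring care, rather than a genuine obstacle, is the justification that the abstract derivative operator $M=Dg(f(a))$ acts as multiplication by the scalar $g^{\prime }(f(a))$ in the one-dimensional case; this is immediate from the definition of the Fréchet derivative of a scalar function of one variable and is consistent with the single-variable chain rule already recorded in Theorem \ref{teo3}. I expect no further difficulty, since the entire content of the corollary is the translation of notation from $\mathbb{V}$ to $\mathcal{V}$ together with the reduction of $M\circ L$ to an ordinary product.
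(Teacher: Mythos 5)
Your proposal is correct and matches the paper's (implicit) reasoning exactly: the paper offers no separate proof of Corollary \ref{AT}, treating it as the immediate specialization of Theorem \ref{AR} via the Remark following Definition \ref{AL}, which is precisely the identification of $\mathbb{V}$ with $\mathcal{V}$ and of $M\circ L$ with a scalar product that you spell out. No gaps.
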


{\rm{Corollary (\ref{AT})}} says that {\rm{Theorem (\ref{AR})}} generalizes {\rm{Theorem (\ref{teo2})}}.

\begin{corollary}\label{AY} Consider all the conditions of {\rm{Theorem (\ref{AR})}} satisfied. Then
\begin{eqnarray*}
&&_{i}^{\rho }\mathbb{V}_{\gamma ,\beta ,\alpha }^{\delta ,p,q}\left( g\circ
f\right) \left( a\right)   \notag \\
&=&g^{\prime }\left( f\left( a\right) \right) \left( 
\tiny\begin{array}{cccc}
\frac{\Gamma \left( \beta \right) \left( \rho \right) _{q}}{\Gamma \left(
\gamma +\beta \right) \left( \delta \right) _{p}}f_{1}\left( a\right)
^{1-\alpha } & 0 & ... & 0 \\ 
0 & \frac{\Gamma \left( \beta \right) \left( \rho \right) _{q}}{\Gamma
\left( \gamma +\beta \right) \left( \delta \right) _{p}}f_{2}\left( a\right)
^{1-\alpha } & ... & 0 \\ 
\vdots  & \vdots  & \ddots  & \vdots  \\ 
0 & 0 & ... & \frac{\Gamma \left( \beta \right) \left( \rho \right) _{q}}{%
\Gamma \left( \gamma +\beta \right) \left( \delta \right) _{p}}f_{n}\left(
a\right) ^{1-\alpha }%
\end{array}
\right) 
\end{eqnarray*}
where,
$\left( 
\tiny\begin{array}{cccc}
\frac{\Gamma \left( \beta \right) \left( \rho \right) _{q}}{\Gamma \left(
\gamma +\beta \right) \left( \delta \right) _{p}}f_{1}\left( a\right)
^{1-\alpha } & 0 & ... & 0 \\ 
0 & \frac{\Gamma \left( \beta \right) \left( \rho \right) _{q}}{\Gamma
\left( \gamma +\beta \right) \left( \delta \right) _{p}}f_{2}\left( a\right)
^{1-\alpha } & ... & 0 \\ 
\vdots  & \vdots  & \ddots  & \vdots  \\ 
0 & 0 & ... & \frac{\Gamma \left( \beta \right) \left( \rho \right) _{q}}{%
\Gamma \left( \gamma +\beta \right) \left( \delta \right) _{p}}f_{n}\left(
a\right) ^{1-\alpha }%
\end{array}%
\right) $, is the matrix corresponding to the linear transformation $_{i}^{\rho }\mathbb{V}_{\gamma ,\beta ,\alpha }^{\delta ,p,q}f\left( a\right)$.

\end{corollary}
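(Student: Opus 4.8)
The plan is to prove the identity in two stages: invoke the chain rule of Theorem~\ref{AR}, and then make the inner fractional derivative explicit as a matrix. Theorem~\ref{AR} already yields
\begin{equation*}
_{i}^{\rho }\mathbb{V}_{\gamma ,\beta ,\alpha }^{\delta ,p,q}\left( g\circ f\right) \left( a\right) = g^{\prime }\left( f\left( a\right) \right)\,_{i}^{\rho }\mathbb{V}_{\gamma ,\beta ,\alpha }^{\delta ,p,q}f\left( a\right),
\end{equation*}
so the statement reduces to computing the matrix of the linear transformation $_{i}^{\rho }\mathbb{V}_{\gamma ,\beta ,\alpha }^{\delta ,p,q}f(a)$ with respect to the canonical bases and recognizing it as the displayed diagonal matrix; left-multiplication by $g^{\prime }(f(a))$ then gives the asserted product.

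To obtain that matrix I would read it off column by column from Definition~\ref{AL}, fixing one index at a time and letting only the corresponding increment vary. The analytic heart is the behaviour of the truncated function near the origin: retaining the $k=0$ and $k=1$ terms of Eq.(\ref{A10}) gives $_{i}H_{\gamma ,\beta ,p}^{\rho ,\delta ,q}(0)=1$ together with the first-order expansion
\begin{equation*}
_{i}H_{\gamma ,\beta ,p}^{\rho ,\delta ,q}\left( z\right) = 1 + \frac{\Gamma\left(\beta\right)\left(\rho\right)_{q}}{\Gamma\left(\gamma+\beta\right)\left(\delta\right)_{p}}\,z + o(z),\qquad z\to 0 .
\end{equation*}
Hence for a base point $b>0$ and a scalar increment $\eta$ one has $b\,_{i}H_{\gamma ,\beta ,p}^{\rho ,\delta ,q}\left( \eta\,b^{-\alpha }\right) = b + \frac{\Gamma(\beta)(\rho)_{q}}{\Gamma(\gamma+\beta)(\delta)_{p}}\,b^{1-\alpha}\eta + o(\eta)$, which is exactly the scaling recorded in the one-variable reduction of Theorem~\ref{teo2}(5)--(6). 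Feeding this expansion into the increment $f(\cdots)-f(a)-L(\varepsilon)$ and matching the linear part along each axis determines the scalar factor $\frac{\Gamma(\beta)(\rho)_{q}}{\Gamma(\gamma+\beta)(\delta)_{p}}$ and the power $1-\alpha$ of the relevant coordinate base point; collecting the columns then assembles the displayed diagonal matrix, which we thereby identify with the matrix of $_{i}^{\rho }\mathbb{V}_{\gamma ,\beta ,\alpha }^{\delta ,p,q}f(a)$.

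The step I expect to be the main obstacle is the uniform control of the remainder. One must verify that, after composing with $f$ and dividing by $\Vert\varepsilon\Vert$, the higher-order terms $o(\eta)$ produced by the truncated-function expansion vanish in the joint limit $\varepsilon\to 0$, so that the entries read off axis by axis genuinely represent the whole linear transformation and not merely its directional restrictions. Here the $\alpha$-differentiability hypothesis supplies the existence of the limit, and the uniqueness theorem proved earlier guarantees that the representing matrix is well defined; with the remainder controlled, substituting the assembled matrix for $_{i}^{\rho }\mathbb{V}_{\gamma ,\beta ,\alpha }^{\delta ,p,q}f(a)$ in the chain-rule identity completes the proof.
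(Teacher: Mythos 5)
The paper states Corollary~\ref{AY} with no proof at all, treating it as an immediate restatement of Theorem~\ref{AR} with the matrix of $_{i}^{\rho }\mathbb{V}_{\gamma ,\beta ,\alpha }^{\delta ,p,q}f(a)$ written out; so you are supplying an argument the paper omits. Your first stage (invoking Theorem~\ref{AR}) and your expansion $_{i}H_{\gamma ,\beta ,p}^{\rho ,\delta ,q}(z)=1+\frac{\Gamma(\beta)(\rho)_{q}}{\Gamma(\gamma+\beta)(\delta)_{p}}z+o(z)$ are both correct. The gap is in the second stage: the axis-by-axis computation you describe does \emph{not} assemble the diagonal matrix displayed in the corollary. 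Feeding $a_{j}\,_{i}H_{\gamma ,\beta ,p}^{\rho ,\delta ,q}(\varepsilon_{j}a_{j}^{-\alpha})=a_{j}+\frac{\Gamma(\beta)(\rho)_{q}}{\Gamma(\gamma+\beta)(\delta)_{p}}a_{j}^{1-\alpha}\varepsilon_{j}+o(\varepsilon_{j})$ into $f$ and matching the linear part gives
\begin{equation*}
_{i}^{\rho }\mathbb{V}_{\gamma ,\beta ,\alpha }^{\delta ,p,q}f(a)
= Df(a)\cdot \mathrm{diag}\!\left(\tfrac{\Gamma(\beta)(\rho)_{q}}{\Gamma(\gamma+\beta)(\delta)_{p}}a_{1}^{1-\alpha},\dots,\tfrac{\Gamma(\beta)(\rho)_{q}}{\Gamma(\gamma+\beta)(\delta)_{p}}a_{n}^{1-\alpha}\right),
\end{equation*}
an $m\times n$ matrix whose diagonal scaling involves the base-point coordinates $a_{j}^{1-\alpha}$ and which carries the full integer-order Jacobian $Df(a)$ in front. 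This is exactly what the paper's own Example with $f(x,y)=\sin(x)$ exhibits: its Jacobian is $\bigl[\,\tfrac{\Gamma(\beta)(\rho)_{q}}{\Gamma(\gamma+\beta)(\delta)_{p}}a^{1-\alpha}\cos(a)\;\;0\,\bigr]$, which is not diagonal and involves $a^{1-\alpha}$, not $(\sin a)^{1-\alpha}$. The matrix in the corollary instead has entries $f_{j}(a)^{1-\alpha}$; those factors can only arise from expanding the truncated function at the point $f(a)$, i.e.\ from the increments $f_{j}(a)\,_{i}H_{\gamma ,\beta ,p}^{\rho ,\delta ,q}\bigl(k_{j}f_{j}(a)^{-\alpha}\bigr)$ that occur in the $\alpha$-differentiation of $g$ in the proof of Theorem~\ref{AR}, not from the matrix of $_{i}^{\rho }\mathbb{V}_{\gamma ,\beta ,\alpha }^{\delta ,p,q}f(a)$.

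Concretely, then, your proof as written asserts that the column-by-column computation "assembles the displayed diagonal matrix," but carried out honestly it produces a different matrix, so the final substitution into the chain-rule identity does not yield the stated formula. To repair the argument you would either have to justify the identification $_{i}^{\rho }\mathbb{V}_{\gamma ,\beta ,\alpha }^{\delta ,p,q}f(a)=\mathrm{diag}\bigl(\tfrac{\Gamma(\beta)(\rho)_{q}}{\Gamma(\gamma+\beta)(\delta)_{p}}f_{j}(a)^{1-\alpha}\bigr)$ on its own terms (which your expansion contradicts, and which is dimensionally untenable unless $m=n$ and $f$ is essentially diagonal), or trace the $f_{j}(a)^{1-\alpha}$ factors to their actual source in the proof of Theorem~\ref{AR} and restate what the diagonal matrix represents. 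Your concern about uniform control of the $o(\eta)$ remainders is legitimate but secondary; the substantive obstruction is that the matrix you compute and the matrix in the statement are not the same object.
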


\begin{corollary}\label{AC} Consider all the conditions of {\rm{Theorem (\ref{AR})}} satisfied. For $f(a)=a$, {\rm{Corollary (\ref{AY})}}, says that
\begin{eqnarray*}
_{i}^{\rho }V_{\gamma ,\beta ,\alpha }^{\delta ,p,q}g\left( a\right) 
&=&g^{\prime }\left( a\right) \left( 
\tiny\begin{array}{cccc}
\frac{\Gamma \left( \beta \right) \left( \rho \right) _{q}}{\Gamma \left(
\gamma +\beta \right) \left( \delta \right) _{p}}a_{1}^{1-\alpha } & 0 & ...
& 0 \\ 
0 & \frac{\Gamma \left( \beta \right) \left( \rho \right) _{q}}{\Gamma
\left( \gamma +\beta \right) \left( \delta \right) _{p}}a_{2}^{1-\alpha } & 
... & 0 \\ 
\vdots  & \vdots  & \ddots  & \vdots  \\ 
0 & 0 & ... & \frac{\Gamma \left( \beta \right) \left( \rho \right) _{q}}{%
\Gamma \left( \gamma +\beta \right) \left( \delta \right) _{p}}%
a_{n}^{1-\alpha }%
\end{array}%
\right)   \notag \\
&=&g^{\prime }\left( a\right) \frac{\Gamma \left( \beta \right) \left( \rho
\right) _{q}}{\Gamma \left( \gamma +\beta \right) \left( \delta \right) _{p}}%
L_{\alpha }^{1-\alpha }.
\end{eqnarray*}
\end{corollary}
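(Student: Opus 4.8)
The plan is to obtain this statement as an immediate specialization of Corollary \ref{AY}, taking $f$ to be the identity map $f(x)=x$ on $\mathbb{R}^n$ (so that $m=n$). First I would verify that this choice of $f$ is admissible: since each coordinate $a_i>0$, the identity is $\alpha$-differentiable at $a$ and satisfies $f_i(a)=a_i>0$, so all the hypotheses of Theorem \ref{AR}, and hence of Corollary \ref{AY}, are met, and $g$ may legitimately be composed at $f(a)=a$.

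Next I would simply substitute. With $f=\mathrm{id}$ the composition $g\circ f$ collapses to $g$, so the left-hand side of Corollary \ref{AY} becomes $_{i}^{\rho }\mathbb{V}_{\gamma ,\beta ,\alpha }^{\delta ,p,q}g(a)$. On the right-hand side, $g'(f(a))=g'(a)$, and each diagonal entry $\frac{\Gamma(\beta)(\rho)_q}{\Gamma(\gamma+\beta)(\delta)_p}f_i(a)^{1-\alpha}$ reduces to $\frac{\Gamma(\beta)(\rho)_q}{\Gamma(\gamma+\beta)(\delta)_p}a_i^{1-\alpha}$ because $f_i(a)=a_i$. This yields exactly the first displayed equality of the corollary.

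Finally, to reach the compact second form, I would factor the common scalar $\frac{\Gamma(\beta)(\rho)_q}{\Gamma(\gamma+\beta)(\delta)_p}$ out of the diagonal matrix, leaving the diagonal matrix $\mathrm{diag}(a_1^{1-\alpha},\dots,a_n^{1-\alpha})$, which is precisely what the notation $L_\alpha^{1-\alpha}$ abbreviates; pulling the scalar through the product with $g'(a)$ then gives the stated closed form.

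There is essentially no genuine obstacle here: the entire content is a substitution into an already-proved formula, so the argument is one line of bookkeeping once the identity map is plugged in. The only point deserving a word of justification is the admissibility of $f=\mathrm{id}$, which rests on the positivity of the coordinates $a_i$, and the only cosmetic step is recognizing that the prefactor is identical in every diagonal slot, so that it may be extracted to produce the $L_\alpha^{1-\alpha}$ notation.
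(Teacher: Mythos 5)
Your proposal is correct and matches the paper's (implicit) reasoning exactly: the paper offers no proof of Corollary \ref{AC}, presenting it as the immediate specialization of Corollary \ref{AY} to the identity map, which is precisely the substitution you carry out. Your added remarks on the admissibility of $f=\mathrm{id}$ (using $a_i>0$) and on factoring the common scalar to produce the $L_{\alpha}^{1-\alpha}$ notation are harmless elaborations of the same one-line argument.
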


\begin{remark} The {\rm{Corollary (\ref{AC})}} generalizes part 5 of the {\rm{Theorem (\ref{teo2})}}.
\end{remark}

\begin{theorem}\label{JOS} Let $f$ be a vector valued function with $n$ variables such that $f(x_{1},...,x_{n})=(f_{1}(x_{1},...,x_{n}),...,f_{n}(x_{1},...,x_{n}))$. Then $f$ is $\alpha$-differentiable function at $a=(a_{1},...,a_{n})\in\mathbb{R}^n$, with $a_{i}>0$ if, and only if, each $f_{i}$ is,
\begin{equation}
_{i}^{\rho }\mathbb{V}_{\gamma ,\beta ,\alpha }^{\delta ,p,q}f\left(
a\right) =\left( _{i}^{\rho }\mathbb{V}_{\gamma ,\beta ,\alpha }^{\delta
,p,q}f_{1}\left( a\right) ,...,\,_{i}^{\rho }\mathbb{V}_{\gamma ,\beta ,\alpha
}^{\delta ,p,q}f_{m}\left( a\right) \right) ,
\end{equation}
where $\alpha\in (0,1]$ and $\rho,\delta,\gamma,\beta\in\mathbb{C}$, $p,q>0$ with, $Re(\rho)>0$, $Re(\delta)>0$, $Re(\gamma)>0$, $Re(\beta)>0$ and $Re(\gamma)+p\geq q$.
\end{theorem}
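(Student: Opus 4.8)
The plan is to exploit the fact that $\alpha$-differentiability of a vector-valued map is a coordinatewise property, thereby reducing the statement to the scalar Definition \ref{def7} applied to each component $f_j$. First I would write the candidate linear transformation in coordinates as $L=(L_1,\dots,L_m)$, where $L_j=\pi_j\circ L$ is the composition of $L$ with the $j$-th coordinate projection $\pi_j:\mathbb{R}^m\to\mathbb{R}$; each $L_j$ is again a linear transformation $\mathbb{R}^n\to\mathbb{R}$. Setting
\[
R(\varepsilon)=f\!\left(a_{1}\,_{i}H_{\gamma,\beta,p}^{\rho,\delta,q}(\varepsilon_{1} a_{1}^{-\alpha}),\dots,a_{n}\,_{i}H_{\gamma,\beta,p}^{\rho,\delta,q}(\varepsilon_{n} a_{n}^{-\alpha})\right)-f(a)-L(\varepsilon),
\]
the $j$-th coordinate $R_j(\varepsilon)$ is exactly the numerator appearing in the scalar $\alpha$-differentiability condition for $f_j$ with candidate derivative $L_j$.

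The key step is the elementary two-sided norm estimate valid for every $v=(v_1,\dots,v_m)\in\mathbb{R}^m$,
\[
|v_j|\le \|v\|\le \sum_{k=1}^{m}|v_k|,\qquad j=1,\dots,m.
\]
Applying this to $v=R(\varepsilon)$ and dividing by $\|\varepsilon\|$ yields
\[
\frac{|R_j(\varepsilon)|}{\|\varepsilon\|}\le \frac{\|R(\varepsilon)\|}{\|\varepsilon\|}\le \sum_{k=1}^{m}\frac{|R_k(\varepsilon)|}{\|\varepsilon\|},
\]
which is the entire engine of the proof.

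From here both implications follow directly. For the forward direction, if $f$ is $\alpha$-differentiable at $a$ then $\|R(\varepsilon)\|/\|\varepsilon\|\to 0$ as $\varepsilon\to 0$, and the left inequality forces $|R_j(\varepsilon)|/\|\varepsilon\|\to 0$ for each $j$; hence every $f_j$ is $\alpha$-differentiable at $a$ with $_{i}^{\rho}\mathbb{V}_{\gamma,\beta,\alpha}^{\delta,p,q}f_j(a)=L_j$. For the converse, if each $f_j$ is $\alpha$-differentiable at $a$ with derivative $L_j$, I assemble $L=(L_1,\dots,L_m)$, which is linear; since the right-hand side is a finite sum of terms each tending to $0$, the right inequality forces $\|R(\varepsilon)\|/\|\varepsilon\|\to 0$, so $f$ is $\alpha$-differentiable at $a$ with derivative $L$. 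Comparing coordinates then gives the claimed identity.

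I do not expect a genuine obstacle here: the content is precisely the norm sandwich together with the observation that coordinate projections of a linear map are linear. The one point requiring care is well-definedness, namely that the derivative of $f$ really is the tuple of the $_{i}^{\rho}\mathbb{V}_{\gamma,\beta,\alpha}^{\delta,p,q}f_j(a)$. To secure this I would invoke the uniqueness theorem proved above, which guarantees that the linear transformation assembled in the converse direction coincides with the unique derivative of $f$, and that its coordinates are exactly the scalar derivatives identified in the forward direction.
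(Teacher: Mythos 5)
Your proposal is correct, and its engine --- the two-sided estimate $|v_j|\le\|v\|\le\sum_k|v_k|$ applied to the remainder $R(\varepsilon)$ --- contains the paper's argument as its right-hand half. The paper's own proof only treats one implication: it assumes each $f_j$ is $\alpha$-differentiable, assembles $L=\bigl(\,_{i}^{\rho}\mathbb{V}_{\gamma,\beta,\alpha}^{\delta,p,q}f_{1}(a),\dots,\,_{i}^{\rho}\mathbb{V}_{\gamma,\beta,\alpha}^{\delta,p,q}f_{m}(a)\bigr)$, and bounds the norm of the vector remainder by the sum of the component remainders via the triangle inequality, exactly as in your converse direction. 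The forward implication of the stated ``if and only if'' (that $\alpha$-differentiability of $f$ forces $\alpha$-differentiability of every $f_j$) is never argued in the paper; your use of the left inequality $|R_j(\varepsilon)|\le\|R(\varepsilon)\|$, together with the observation that $L_j=\pi_j\circ L$ is linear, supplies precisely the missing half. Your appeal to the uniqueness theorem to justify that the derivative of $f$ really is the tuple of scalar derivatives is also a point the paper leaves implicit. So your route is not genuinely different in mechanism, but it is more complete than the paper's, and nothing in it fails.
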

\begin{proof}
If  each $f_{i}$ is $\alpha$-differentiable at $a$ and $L=\left( _{i}^{\rho }\mathbb{V}_{\gamma ,\beta ,\alpha }^{\delta,p,q}f_{1}\left( a\right),...,\,_{i}^{\rho }\mathbb{V}_{\gamma ,\beta ,\alpha}^{\delta ,p,q}f_{m}\left( a\right) \right) $, then
\begin{eqnarray}\label{J3}
&&f\left( a_{1}\,_{i}H_{\gamma ,\beta ,p}^{\rho ,\delta ,q}\left( \varepsilon
_{1}a_{1}^{-\alpha }\right) ,...,a_{n}\,_{i}H_{\gamma ,\beta ,p}^{\rho ,\delta
,q}\left( \varepsilon _{n}a_{n}^{-\alpha }\right) \right) -f\left( a\right)
-L\left( \varepsilon \right)   \notag \\
&=&\left[ f_{1}\left( a_{1}\,_{i}H_{\gamma ,\beta ,p}^{\rho ,\delta ,q}\left(
\varepsilon _{1}a_{1}^{-\alpha }\right) ,...,a_{n}\,_{i}H_{\gamma ,\beta
,p}^{\rho ,\delta ,q}\left( \varepsilon _{n}a_{n}^{-\alpha }\right) \right)
-f_{1}\left( a\right) -\,_{i}^{\rho }\mathbb{V}_{\gamma ,\beta ,\alpha
}^{\delta ,p,q}f_{1}\left( a\right) \left( \varepsilon \right) ,...\right.  
\notag \\
&&\left. ...,f_{m}\left( a_{1}\,_{i}H_{\gamma ,\beta ,p}^{\rho ,\delta ,q}\left(
\varepsilon _{1}a_{1}^{-\alpha }\right) ,...,a_{n}\,_{i}H_{\gamma ,\beta
,p}^{\rho ,\delta ,q}\left( \varepsilon _{n}a_{n}^{-\alpha }\right) \right)
-f_{m}\left( a\right) -\,_{i}^{\rho }\mathbb{V}_{\gamma ,\beta ,\alpha
}^{\delta ,p,q}f_{m}\left( a\right) \left( \varepsilon \right) \right]. \notag \\
\end{eqnarray}

Taking the limit $\varepsilon\rightarrow 0$ on both sides of {\rm Eq.(\ref{J3})}, we have
\begin{eqnarray*}
&&\underset{\varepsilon \rightarrow 0}{\lim }\frac{\left\Vert f\left(
a_{1}\,_{i}H_{\gamma ,\beta ,p}^{\rho ,\delta ,q}\left( \varepsilon
_{1}a_{1}^{-\alpha }\right) ,...,a_{n}\,_{i}H_{\gamma ,\beta ,p}^{\rho ,\delta
,q}\left( \varepsilon _{n}a_{n}^{-\alpha }\right) \right) -f\left( a\right)
-L\left( \varepsilon \right) \right\Vert }{\left\Vert \varepsilon
\right\Vert }  \notag \\
&=&\underset{\varepsilon \rightarrow 0}{\lim }\frac{\left\Vert \overset{n}{%
\underset{j=1}{\sum }}f_{j}\left( a_{j}\,_{i}H_{\gamma ,\beta ,p}^{\rho ,\delta
,q}\left( \varepsilon _{j}a_{j}^{-\alpha }\right) \right) -\,f_{j}\left(
a\right) -\,_{i}^{\rho }\mathbb{V}_{\gamma ,\beta ,\alpha }^{\delta
,p,q}f_{j}\left( a\right) \left( \varepsilon \right) \right\Vert }{%
\left\Vert \varepsilon \right\Vert }  \notag \\
&\leq &\underset{\varepsilon \rightarrow 0}{\lim }\overset{n}{\underset{j=1}{%
\sum }}\frac{\left\Vert f_{j}\left( a_{j}\,_{i}H_{\gamma ,\beta ,p}^{\rho ,\delta
,q}\left( \varepsilon _{j}a_{j}^{-\alpha }\right) \right) -\,f_{j}\left(
a\right) -\,_{i}^{\rho }\mathbb{V}_{\gamma ,\beta ,\alpha }^{\delta
,p,q}f_{j}\left( a\right) \left( \varepsilon \right) \right\Vert }{%
\left\Vert \varepsilon \right\Vert }=0,
\end{eqnarray*}
which is the result.
\end{proof}

\begin{theorem} Let $0<\alpha\leq 1$, $\lambda,\mu\in\mathbb{R}$, $\gamma ,\beta ,\rho ,\delta \in \mathbb{C}$ and $p,q>0$ such that ${Re}\left( \gamma \right) >0$, ${Re}\left( \beta \right) >0$, ${Re}\left( \rho \right) >0$, ${Re}\left( \delta \right) >0$, ${Re}\left( \gamma \right) +p\geq q$ and $f,g$ $\alpha$-differentiable at $a=(a_{1},...,a_{n})\in\mathbb{R}^n$, with $a_{i}>0$. Then,

\begin{enumerate}
\item $ _{i}^{\rho }\mathbb{V}_{\gamma ,\beta ,\alpha }^{\delta ,p,q}\left( \lambda
f+\mu g\right) \left( a\right) =\lambda _{i}^{\rho }\mathbb{V}_{\gamma,\beta ,\alpha }^{\delta ,p,q}f\left( a\right) +\mu _{i}^{\rho }\mathbb{V}%
_{\gamma ,\beta ,\alpha }^{\delta ,p,q}g\left( a\right) $.

\item $_{i}^{\rho }\mathbb{V}_{\gamma ,\beta ,\alpha }^{\delta ,p,q}\left( f\cdot
g\right) \left( a\right) =f\left( a\right) _{i}^{\rho }\mathbb{V}_{\gamma
,\beta ,\alpha }^{\delta ,p,q}g\left( a\right) +g\left( a\right) _{i}^{\rho }%
\mathbb{V}_{\gamma ,\beta ,\alpha }^{\delta ,p,q}f\left( a\right) $.
\end{enumerate}
\end{theorem}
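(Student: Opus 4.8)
The plan is to verify both identities directly from Definition \ref{AL} by exhibiting the correct candidate linear transformation in each case; the uniqueness theorem established at the beginning of this section then guarantees that the exhibited candidate is in fact the multivariable truncated $\mathcal{V}$-fractional derivative. Throughout I would write $a^{\varepsilon}:=\left(a_{1}\,_{i}H_{\gamma,\beta,p}^{\rho,\delta,q}(\varepsilon_{1}a_{1}^{-\alpha}),\ldots,a_{n}\,_{i}H_{\gamma,\beta,p}^{\rho,\delta,q}(\varepsilon_{n}a_{n}^{-\alpha})\right)$ for the perturbed argument appearing in Definition \ref{AL}, and abbreviate $L_{f}=\,_{i}^{\rho}\mathbb{V}_{\gamma,\beta,\alpha}^{\delta,p,q}f(a)$ and $L_{g}=\,_{i}^{\rho}\mathbb{V}_{\gamma,\beta,\alpha}^{\delta,p,q}g(a)$. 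The key structural fact I would exploit is that $a^{\varepsilon}$ depends only on the base point $a$ and on $\varepsilon$, not on the function being differentiated, so the very same $a^{\varepsilon}$ may be fed to $f$, $g$, $\lambda f+\mu g$ and $f\cdot g$ simultaneously.

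For part (1), I would take $L=\lambda L_{f}+\mu L_{g}$, which is linear as a linear combination of linear transformations. Since evaluation at $a^{\varepsilon}$ is pointwise, the numerator $(\lambda f+\mu g)(a^{\varepsilon})-(\lambda f+\mu g)(a)-L(\varepsilon)$ splits exactly as $\lambda\big[f(a^{\varepsilon})-f(a)-L_{f}(\varepsilon)\big]+\mu\big[g(a^{\varepsilon})-g(a)-L_{g}(\varepsilon)\big]$. Applying the triangle inequality, dividing by $\|\varepsilon\|$ and passing to the limit $\varepsilon\to 0$, each bracket contributes $0$ by the $\alpha$-differentiability of $f$ and of $g$; hence the defining limit of Definition \ref{AL} holds for this $L$, which proves the claim.

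For part (2), with $f,g$ real valued so that the products make sense, I would take $L=f(a)L_{g}+g(a)L_{f}$ and use the classical add-and-subtract decomposition
\begin{align*}
&f(a^{\varepsilon})g(a^{\varepsilon})-f(a)g(a)-L(\varepsilon)\\
&=f(a^{\varepsilon})\big[g(a^{\varepsilon})-g(a)-L_{g}(\varepsilon)\big]+\big[f(a^{\varepsilon})-f(a)\big]L_{g}(\varepsilon)+g(a)\big[f(a^{\varepsilon})-f(a)-L_{f}(\varepsilon)\big].
\end{align*}
Dividing by $\|\varepsilon\|$ and letting $\varepsilon\to 0$, the first and third terms vanish by the $\alpha$-differentiability of $g$ and of $f$ (the factors $f(a^{\varepsilon})$ and $g(a)$ being bounded near $a$), while the middle term vanishes because $f(a^{\varepsilon})-f(a)\to 0$ by the continuity theorem proved above and $\|L_{g}(\varepsilon)\|/\|\varepsilon\|$ stays bounded by the operator norm of $L_{g}$.

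The linearity estimate in part (1) is immediate; the main obstacle is the middle term of part (2), where one must justify passing to the limit. This is precisely where the continuity theorem (every $\alpha$-differentiable function is continuous) together with the linear bound $\|L_{g}(\varepsilon)\|\le\|L_{g}\|\,\|\varepsilon\|$ are essential, since they force $\big[f(a^{\varepsilon})-f(a)\big]L_{g}(\varepsilon)/\|\varepsilon\|$ to $0$. In both parts the uniqueness theorem of this section identifies the exhibited $L$ with $_{i}^{\rho}\mathbb{V}_{\gamma,\beta,\alpha}^{\delta,p,q}$ of the relevant combination, completing the proof.
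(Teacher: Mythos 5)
Your proposal is correct and follows essentially the same route as the paper: part (1) by the triangle inequality applied to the candidate $\lambda L_{f}+\mu L_{g}$, and part (2) by the same add-and-subtract decomposition into two differentiability remainders plus a cross term controlled by the continuity theorem and the bound $\Vert L(\varepsilon)\Vert\le K\Vert\varepsilon\Vert$ (the paper's version merely swaps the roles of $f$ and $g$ in the splitting). Your explicit appeal to the uniqueness theorem to identify the exhibited $L$ with the derivative is a welcome clarification that the paper leaves implicit.
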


\begin{proof} 
{\rm 1.} Let $A=a_{1}\,_{i}H_{\gamma ,\beta ,p}^{\rho ,\delta ,q}\left( \varepsilon _{1}a_{1}^{-\alpha }\right) +\cdot \cdot \cdot +a_{n}\,_{i}H_{\gamma ,\beta ,p}^{\rho ,\delta ,q}\left( \varepsilon _{1}a_{n}^{-\alpha }\right) $, then we have,
\begin{eqnarray*}
&&\underset{\varepsilon \rightarrow 0}{\lim }\frac{\left\Vert \left( \lambda
f+\mu g\right) \left( A\right) -\left( \lambda f+\mu g\right) \left(
a\right) -\left( \lambda \,_{i}^{\rho }\mathbb{V}_{\gamma ,\beta ,\alpha
}^{\delta ,p,q}f\left( a\right) +\mu \,_{i}^{\rho }\mathbb{V}_{\gamma ,\beta
,\alpha }^{\delta ,p,q}g\left( a\right) \right) \left( \varepsilon \right)
\right\Vert }{\left\Vert \varepsilon \right\Vert }  \notag \\
&=&\underset{\varepsilon \rightarrow 0}{\lim }\frac{\left\Vert \lambda
f\left( A\right) -\lambda f\left( a\right) -\lambda _{i}^{\rho }\mathbb{V}%
_{\gamma ,\beta ,\alpha }^{\delta ,p,q}f\left( a\right) \left( \varepsilon
\right) +\mu g\left( A\right) -\mu g\left( a\right) -\mu \,_{i}^{\rho }\mathbb{%
V}_{\gamma ,\beta ,\alpha }^{\delta ,p,q}g\left( a\right) \left( \varepsilon
\right) \right\Vert }{\left\Vert \varepsilon \right\Vert }  \notag \\
&\leq &\underset{\varepsilon \rightarrow 0}{\lim }\frac{\left\Vert \lambda
f\left( A\right) -\lambda f\left( a\right) -\lambda _{i}^{\rho }\mathbb{V}%
_{\gamma ,\beta ,\alpha }^{\delta ,p,q}f\left( a\right) \left( \varepsilon
\right) \right\Vert }{\left\Vert \varepsilon \right\Vert }+\underset{%
\varepsilon \rightarrow 0}{\lim }\frac{\left\Vert \mu g\left( A\right) -\mu
g\left( a\right) -\mu\, _{i}^{\rho }\mathbb{V}_{\gamma ,\beta ,\alpha
}^{\delta ,p,q}g\left( a\right) \left( \varepsilon \right) \right\Vert }{%
\left\Vert \varepsilon \right\Vert }  \notag \\
&=&\lambda \underset{\varepsilon \rightarrow 0}{\lim }\frac{\left\Vert
f\left( A\right) -f\left( a\right) -\, _{i}^{\rho }\mathbb{V}_{\gamma ,\beta
,\alpha }^{\delta ,p,q}f\left( a\right) \left( \varepsilon \right)
\right\Vert }{\left\Vert \varepsilon \right\Vert }+\mu \underset{%
\varepsilon \rightarrow 0}{\lim }\frac{\left\Vert g\left( A\right) -g\left(
a\right) -\,_{i}^{\rho }\mathbb{V}_{\gamma ,\beta ,\alpha }^{\delta
,p,q}g\left( a\right) \left( \varepsilon \right) \right\Vert }{\left\Vert
\varepsilon \right\Vert }=0.
\end{eqnarray*}

So, the proof is complete.

{\rm 2.} Let $A=a_{1}\,_{i}H_{\gamma ,\beta ,p}^{\rho ,\delta ,q}\left( \varepsilon _{1}a_{1}^{-\alpha }\right) +\cdot \cdot \cdot +a_{n}\,_{i}H_{\gamma ,\beta ,p}^{\rho ,\delta ,q}\left( \varepsilon _{1}a_{n}^{-\alpha }\right) $, then we have,
\begin{eqnarray*}
&&\underset{\varepsilon \rightarrow 0}{\lim }\frac{\left\Vert \left( f\cdot
g\right) \left( A\right) -\left( f\cdot g\right) \left( a\right) -\left(
f\left( a\right)\, _{i}^{\rho }\mathbb{V}_{\gamma ,\beta ,\alpha }^{\delta
,p,q}g\left( a\right) +g\left( a\right) _{i}^{\rho }\mathbb{V}_{\gamma
,\beta ,\alpha }^{\delta ,p,q}f\left( a\right) \right) \left( \varepsilon
\right) \right\Vert }{\left\Vert \varepsilon \right\Vert }  \notag \\
&=&\underset{\varepsilon \rightarrow 0}{\lim }\frac{\left\Vert 
\begin{array}{c}
f\left( A\right) g\left( A\right) -f\left( a\right) g\left( A\right)
-g\left( A\right) _{i}^{\rho }\mathbb{V}_{\gamma ,\beta ,\alpha }^{\delta
,p,q}f\left( a\right) \left( \varepsilon \right) +f\left( a\right) g\left(
A\right) -f\left( a\right) g\left( a\right)  \\ 
-f\left( a\right) _{i}^{\rho }\mathbb{V}_{\gamma ,\beta ,\alpha }^{\delta
,p,q}g\left( a\right) \left( \varepsilon \right) +g\left( A\right)
_{i}^{\rho }\mathbb{V}_{\gamma ,\beta ,\alpha }^{\delta ,p,q}f\left(
a\right) \left( \varepsilon \right) -g\left( a\right) _{i}^{\rho }\mathbb{V}%
_{\gamma ,\beta ,\alpha }^{\delta ,p,q}f\left( a\right) \left( \varepsilon
\right) 
\end{array}%
\right\Vert }{\left\Vert \varepsilon \right\Vert }  \notag \\
&\leq &\underset{\varepsilon \rightarrow 0}{\lim }\frac{\left\Vert f\left(
A\right) g\left( A\right) -f\left( a\right) g\left( A\right) -g\left(
A\right) _{i}^{\rho }\mathbb{V}_{\gamma ,\beta ,\alpha }^{\delta
,p,q}f\left( a\right) \left( \varepsilon \right) \right\Vert }{\left\Vert
\varepsilon \right\Vert }+  \notag \\
&&\underset{\varepsilon \rightarrow 0}{\lim }\frac{\left\Vert f\left(
a\right) g\left( A\right) -f\left( a\right) g\left( a\right) -f\left(
a\right) _{i}^{\rho }\mathbb{V}_{\gamma ,\beta ,\alpha }^{\delta
,p,q}g\left( a\right) \left( \varepsilon \right) \right\Vert }{\left\Vert
\varepsilon \right\Vert }+  \notag \\
&&\underset{\varepsilon \rightarrow 0}{\lim }\frac{\left\Vert g\left(
A\right) _{i}^{\rho }\mathbb{V}_{\gamma ,\beta ,\alpha }^{\delta
,p,q}f\left( a\right) \left( \varepsilon \right) -g\left( a\right)
_{i}^{\rho }\mathbb{V}_{\gamma ,\beta ,\alpha }^{\delta ,p,q}f\left(
a\right) \left( \varepsilon \right) \right\Vert }{\left\Vert \varepsilon
\right\Vert }  \notag \\
&=&\underset{\varepsilon \rightarrow 0}{\lim }\left\Vert _{i}^{\rho }\mathbb{%
V}_{\gamma ,\beta ,\alpha }^{\delta ,p,q}f\left( a\right) \left( \varepsilon
\right) \right\Vert \frac{\left\Vert g\left( A\right) -g\left( a\right)
\right\Vert }{\left\Vert \varepsilon \right\Vert }  \notag \\
&\leq &K\underset{\varepsilon \rightarrow 0}{\lim }\left\Vert \left(
\varepsilon \right) \right\Vert \frac{\left\Vert g\left( A\right) -g\left(
a\right) \right\Vert }{\left\Vert \varepsilon \right\Vert }=0,
\end{eqnarray*}
with $K>0$. So, the proof is complete.
\end{proof}

%%%%%%%%%%%%%%%%%%%%%%%%%%%%%%%%%%%%%%%%%%%%%%%%%%%%%%%%%%%%%%%%%%%%%%%%%%%%%%%%%%%%%%%%%%%%%%%%%%%%%%%%%%%%%%%%%%%%%%%%%%%%%%%%%%%%%%%%%%%%%%%%%%%%%%%%%%%%%%%%%%%%%%%%%%%%%%%%%%
\section{ Truncated $\mathcal{V}$-fractional partial derivatives and applications}
In this section, we introduce the truncated $\mathcal{V}$-fractional partial derivative and discuss applications: the theorem associated with the commutativity property of two truncated $\mathcal{V}$-fractional partial derivatives and the truncated $\mathcal{V}$-fractional Green's theorem.

\begin{definition}Let $f$ be a real valued function with $n$ variables and $a=(a_{1},...,a_{n})\in\mathbb{R}^n$ be a point whose $i^{th}$ component is positive. Then, the limit

\begin{equation}
\underset{\varepsilon \rightarrow 0}{\lim }\frac{f\left(
a_{1},...,a_{j}\,_{i}H_{\gamma ,\beta ,p}^{\rho ,\delta ,q}\left( \varepsilon
a_{j}^{-\alpha }\right) ,...,a_{n}\right) -f\left( a_{1},...,a_{n}\right) }{\varepsilon },
\end{equation}
if it exists, is denoted by $\displaystyle\frac{\partial ^{\alpha }}{\partial x^{\alpha }}f\left( a\right) :=\displaystyle\frac{\partial ^{\alpha }}{\partial x^{\alpha }}f\left( x\right) \mid _{x=a}$, and called the $i^{th}$ truncated $\mathcal{V}$-fractional partial derivative of $f$ of order $\alpha\in(0,1]$ at $a$.
\end{definition}

\begin{theorem} Let $f$ be a vector valued function with $n$ variables. If $f$ is $\alpha$-differentiable at $a=(a_{1},...,a_{n})\in\mathbb{R}^n$, with $a_{j}>0$, then $\displaystyle\frac{\partial ^{\alpha }}{\partial x^{\alpha }_{p}}f_{j}\left( a\right) $ of order $\alpha\in(0,1]$ exists for $1\leq j \leq m$,  $1\leq p \leq n$ and the Jacobian of $f$ at $a$ is the $m\times n$ matrix $\left( \displaystyle\frac{\partial ^{\alpha }}{\partial x_{p}^{\alpha }}f_{j}\left(a\right) \right) $.
\end{theorem}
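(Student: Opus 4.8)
The plan is to recover the partial derivatives as the columns of the matrix representing the derivative linear transformation $L := {}_{i}^{\rho}\mathbb{V}_{\gamma,\beta,\alpha}^{\delta,p,q}f(a)$, whose existence is guaranteed by the hypothesis that $f$ is $\alpha$-differentiable at $a$. This mirrors the classical fact that differentiability of a vector field forces all partials to exist and makes the Jacobian coincide with the matrix of partials; the only genuinely new ingredient is the truncated function ${}_{i}H$ appearing inside the increments.

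First I would specialize the defining limit of Definition \ref{AL} to increments supported on a single coordinate axis. Fix indices $1\le j\le m$ and $1\le p\le n$ and take $\varepsilon = t\,e_p$, where $e_p$ denotes the $p$-th canonical basis vector and $t\to 0$. The computation that makes this work is ${}_{i}H_{\gamma,\beta,p}^{\rho,\delta,q}(0)=1$: evaluating the series in \eqref{A10} at $z=0$ leaves only the term $k=0$, and since $(\rho)_0=(\delta)_0=1$ it collapses to $\Gamma(\beta)/\Gamma(\beta)=1$. Hence for every coordinate $\ell\neq p$ one has $a_\ell\,{}_{i}H_{\gamma,\beta,p}^{\rho,\delta,q}(\varepsilon_\ell a_\ell^{-\alpha})=a_\ell\,{}_{i}H_{\gamma,\beta,p}^{\rho,\delta,q}(0)=a_\ell$, so that along this path only the $p$-th argument of $f$ is actually perturbed.

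Since the limit in Definition \ref{AL} vanishes as $\varepsilon\to 0$ in all of $\mathbb{R}^n$, its restriction to the line $\varepsilon = t\,e_p$ must vanish as well; along that line $\|\varepsilon\|=|t|$, and by linearity $L(t\,e_p)=t\,L(e_p)$, so $L(e_p)$ is the $p$-th column of the representing matrix. Passing from norm convergence to componentwise convergence, which are equivalent in finite dimension, I obtain for each $j$
\begin{equation*}
\lim_{t\to 0}\frac{f_j\left(a_1,\ldots,a_p\,{}_{i}H_{\gamma,\beta,p}^{\rho,\delta,q}\left(t\,a_p^{-\alpha}\right),\ldots,a_n\right)-f_j(a)}{t}=\bigl(L(e_p)\bigr)_j.
\end{equation*}
The left-hand side is precisely the $p$-th truncated $\mathcal{V}$-fractional partial derivative $\frac{\partial^\alpha}{\partial x_p^\alpha}f_j(a)$; thus this partial derivative exists and equals the $(j,p)$ entry $\bigl(L(e_p)\bigr)_j$ of the matrix of $L$. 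Letting $j$ and $p$ range over $1\le j\le m$ and $1\le p\le n$ fills out the entire $m\times n$ matrix, which by definition is the $\mathcal{V}$-fractional Jacobian of $f$ at $a$, proving the statement.

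I expect the single delicate point to be the legitimacy of collapsing the $n$-dimensional limit to a one-dimensional limit along a coordinate direction; this rests entirely on the identity ${}_{i}H(0)=1$, which freezes the remaining $n-1$ arguments of $f$ and lets the multivariable difference quotient reduce to the single-variable one. Everything after that --- the linearity of $L$ and the equivalence between convergence of a vector and convergence of its components --- is routine bookkeeping.
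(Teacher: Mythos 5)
Your proof is correct, but it takes a genuinely different route from the paper's. The paper first reduces to the scalar case $m=1$ via Theorem \ref{JOS} (componentwise $\alpha$-differentiability), then introduces the insertion map $h(y)=(a_1,\dots,y,\dots,a_n)$ and identifies $\frac{\partial^\alpha}{\partial x_p^\alpha}f_j(a)$ with ${}_{i}^{\rho}\mathbb{V}_{\gamma,\beta,\alpha}^{\delta,p,q}(f\circ h)(a_p)$, which it evaluates with the chain-rule Corollary \ref{AY}. You instead work directly with the defining limit of Definition \ref{AL}, restricting $\varepsilon$ to the coordinate line $t\,e_p$ and exploiting the identity ${}_{i}H_{\gamma,\beta,p}^{\rho,\delta,q}(0)=1$ (which you verify correctly from Eq.~(\ref{A10})) to freeze the other $n-1$ arguments, so that the multivariable difference quotient collapses to the one appearing in the definition of the truncated partial derivative and yields $\bigl(L(e_p)\bigr)_j$. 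Your route is more elementary and, notably, more self-contained: the paper's appeal to Corollary \ref{AY} implicitly requires integer-order differentiability of the outer function (the chain rule is stated ``for $g$ differentiable in $f(a)$''), an assumption not present in the theorem's hypotheses, whereas your argument uses only the $\alpha$-differentiability assumed in the statement, plus linearity of $L$ and the equivalence of norm and componentwise convergence. What the paper's approach buys in exchange is reuse of machinery it has already built (Theorem \ref{JOS} and the Jacobian/chain-rule corollaries), making the row/column structure of the Jacobian explicit; your approach obtains the same $(j,p)$ entries directly. Both are valid proofs of the stated result.
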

\begin{proof} Let $f\left( x_{1},...,x_{n}\right) =\left( f_{1}\left( x_{1},...,x_{n}\right)
,...,f_{m}\left( x_{1},...,x_{n}\right) \right) $. Suppose first that $m=1$, so that $f\left( x_{1},...,x_{n}\right) \in \mathbb{R}^{n}$. Define $h:\mathbb{R}\rightarrow\mathbb{R}^n$ by $h\left( y\right) =\left( a_{1},...,y,...,a_{n}\right) $ with $y$ in the place of $p^{th}$. Then $\displaystyle\frac{\partial ^{\alpha }}{\partial x_{p}^{\alpha }}f_{j}\left( a\right) =\,_{i}^{\rho }\mathbb{V}_{\gamma ,\beta ,\alpha }^{\delta ,p,q}\left( f\circ h\right)\left( a_{p}\right) $. Hence, by {\rm{Corollary (\ref{AY})}}, we have
\begin{eqnarray*}
&&_{i}^{\rho }\mathbb{V}_{\gamma ,\beta ,\alpha }^{\delta ,p,q}\left( f\circ
h\right) \left( a_{p}\right)   \notag \\
&=&f^{\prime }\left( h\left( a_{p}\right) \right) \left( 
\tiny\begin{array}{cccc}
\frac{\Gamma \left( \beta \right) \left( \rho \right) _{q}}{\Gamma \left(
\gamma +\beta \right) \left( \delta \right) _{p}}h_{1}\left( a_{p}\right)
^{1-\alpha } & 0 & ... & 0 \\ 
0 & \frac{\Gamma \left( \beta \right) \left( \rho \right) _{q}}{\Gamma
\left( \gamma +\beta \right) \left( \delta \right) _{p}}h_{2}\left(
a_{p}\right) ^{1-\alpha } & ... & 0 \\ 
\vdots  & \vdots  & \ddots  & \vdots  \\ 
0 & 0 & ... & \frac{\Gamma \left( \beta \right) \left( \rho \right) _{q}}{%
\Gamma \left( \gamma +\beta \right) \left( \delta \right) _{p}}h_{n}\left(
a_{p}\right) ^{1-\alpha }%
\end{array}%
\right)   \notag \\
&=&f^{\prime }\left( a\right) \left( 
\tiny\begin{array}{cccc}
\frac{\Gamma \left( \beta \right) \left( \rho \right) _{q}}{\Gamma \left(
\gamma +\beta \right) \left( \delta \right) _{p}}a_{1}^{1-\alpha } & 0 & ...
& 0 \\ 
0 & \frac{\Gamma \left( \beta \right) \left( \rho \right) _{q}}{\Gamma
\left( \gamma +\beta \right) \left( \delta \right) _{p}}a_{j}^{1-\alpha } & 
... & 0 \\ 
\vdots  & \vdots  & \ddots  & \vdots  \\ 
0 & 0 & ... & \frac{\Gamma \left( \beta \right) \left( \rho \right) _{q}}{%
\Gamma \left( \gamma +\beta \right) \left( \delta \right) _{p}}%
a_{n}^{1-\alpha }%
\end{array}%
\right) =_{i}^{\rho }\mathbb{V}_{\gamma ,\beta ,\alpha }^{\delta
,p,q}f\left( a\right).
\end{eqnarray*}

Since $\left( f\circ h\right)\left( a_{p}\right) $ has a single entry $\displaystyle\frac{\partial ^{\alpha }}{\partial x_{p}^{\alpha }}f_{j}\left( a\right)$, this shows that $\displaystyle\frac{\partial ^{\alpha }}{\partial x_{p}^{\alpha }}f_{j}\left( a\right)$ exists and is the $p^{th}$ entry of the $1\times n$ matrix $\,_{i}^{\rho }\mathbb{V}_{\gamma ,\beta ,\alpha }^{\delta ,p,q}f\left( a\right) $. The theorem now follows for arbitrary $m$ since, by {\rm{Theorem (\ref{JOS})}}, each $f_{j}$, is $\alpha$-differentiable and the $p^{th}$ row of $\,_{i}^{\rho }\mathbb{V}_{\gamma ,\beta ,\alpha }^{\delta ,p,q}f\left( a\right) $ is $\,_{i}^{\rho }\mathbb{V}_{\gamma ,\beta ,\alpha }^{\delta ,p,q}f_{j}\left( a\right) $.
\end{proof}

For the next result, we use the Clairaut-Schwarz theorem integer order \cite{STW}, and realize an application of the truncated $\mathcal{V}$-fractional partial derivative.

\begin{theorem}\label{PO} Assume that $f(t,s)$ is a function for which $\partial _{t}^{\alpha }\left( \partial _{s}^{\kappa }f\left( t,s\right) \right) $ is of order $\alpha\in(0,1]$ and $\partial _{s}^{\kappa }\left( \partial _{t}^{\alpha }f\left( t,s\right)\right) $ is of order $\kappa\in(0,1]$ exist and are continuous over the domain $D\subset \mathbb{R}^{2}$, then
\begin{equation*}
\frac{\partial ^{\alpha }}{\partial t^{\alpha }}\left( \frac{\partial
^{\kappa }}{\partial t^{\kappa }}f\left( t,s\right) \right) =\frac{\partial
^{\kappa }}{\partial t^{\kappa }}\left( \frac{\partial ^{\alpha }}{\partial
t^{\alpha }}f\left( t,s\right) \right) .
\end{equation*}
\end{theorem}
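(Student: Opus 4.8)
The plan is to reduce the fractional identity to the classical Clairaut--Schwarz theorem \cite{STW} by invoking part 5 of {\rm Theorem \ref{teo2}}, which turns a truncated $\mathcal{V}$-fractional derivative of a classically differentiable function into an ordinary derivative multiplied by a power of the variable and a fixed constant. Writing the constant that appears there as $C=\frac{\Gamma\left(\beta\right)\left(\rho\right)_{q}}{\Gamma\left(\gamma+\beta\right)\left(\delta\right)_{p}}$ and applying part 5 in each variable separately, for $t,s>0$ one has
\begin{equation*}
\frac{\partial^{\alpha}}{\partial t^{\alpha}}f\left(t,s\right)=C\,t^{1-\alpha}\frac{\partial}{\partial t}f\left(t,s\right),\qquad \frac{\partial^{\kappa}}{\partial s^{\kappa}}f\left(t,s\right)=C\,s^{1-\kappa}\frac{\partial}{\partial s}f\left(t,s\right).
\end{equation*}
(Here the inner operator in the displayed conclusion should be read as $\frac{\partial^{\kappa}}{\partial s^{\kappa}}$, consistent with the hypotheses.)

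First I would compute the left-hand mixed derivative. Applying the $s$-formula and then the $t$-formula, and using that $C\,s^{1-\kappa}$ is constant in $t$, I get
\begin{equation*}
\frac{\partial^{\alpha}}{\partial t^{\alpha}}\left(\frac{\partial^{\kappa}}{\partial s^{\kappa}}f\left(t,s\right)\right)=C\,t^{1-\alpha}\frac{\partial}{\partial t}\left(C\,s^{1-\kappa}\frac{\partial f}{\partial s}\right)=C^{2}\,t^{1-\alpha}s^{1-\kappa}\,\frac{\partial^{2}f}{\partial t\,\partial s}.
\end{equation*}
Symmetrically, applying the $t$-formula first and then the $s$-formula, and using that $C\,t^{1-\alpha}$ is constant in $s$, the right-hand mixed derivative reduces to
\begin{equation*}
\frac{\partial^{\kappa}}{\partial s^{\kappa}}\left(\frac{\partial^{\alpha}}{\partial t^{\alpha}}f\left(t,s\right)\right)=C^{2}\,t^{1-\alpha}s^{1-\kappa}\,\frac{\partial^{2}f}{\partial s\,\partial t}.
\end{equation*}

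The two expressions differ only in the order of the ordinary mixed second partials $\frac{\partial^{2}f}{\partial t\,\partial s}$ and $\frac{\partial^{2}f}{\partial s\,\partial t}$, so the classical Clairaut--Schwarz theorem closes the argument. The main obstacle is justifying that appeal: the hypothesis is stated for the \emph{fractional} mixed derivatives, so I would observe that for $t,s>0$ the conversion factors $C\,t^{1-\alpha}$ and $C\,s^{1-\kappa}$ are continuous and nonvanishing, whence the assumed continuity of the fractional mixed derivatives transfers to continuity of the ordinary mixed partials on $D$, exactly the condition needed to equate $\frac{\partial^{2}f}{\partial t\,\partial s}$ and $\frac{\partial^{2}f}{\partial s\,\partial t}$. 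Substituting this equality into the two displays above gives the stated identity.
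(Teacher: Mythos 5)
Your proposal is correct and follows essentially the same route as the paper: both reduce each truncated $\mathcal{V}$-fractional partial derivative to $C\,t^{1-\alpha}\partial_{t}$ (resp. $C\,s^{1-\kappa}\partial_{s}$), obtain $C^{2}t^{1-\alpha}s^{1-\kappa}\,\partial^{2}f/\partial t\,\partial s$ for each mixed derivative, and conclude by the classical Clairaut--Schwarz theorem. The only difference is cosmetic --- you invoke part 5 of Theorem \ref{teo2} directly, while the paper re-derives that conversion by an explicit change of variable in the defining limit --- and your remark on transferring continuity through the nonvanishing factors $C\,t^{1-\alpha}$, $C\,s^{1-\kappa}$ is, if anything, slightly more careful than the original.
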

\begin{proof}
By means of the {\rm{Definition (\ref{def7})}}, truncated $\mathcal{V}$-fractional derivative at the $s$ variable, we have
\begin{eqnarray*}
\frac{\partial ^{\alpha }}{\partial t^{\alpha }}\left( \frac{\partial
^{\kappa }}{\partial t^{\kappa }}f\left( t,s\right) \right)  &=&\frac{%
\partial ^{\alpha }}{\partial t^{\alpha }}\left( \underset{\varepsilon
\rightarrow 0}{\lim }\frac{f\left( t,s\,_{i}H_{\gamma ,\beta ,p}^{\rho ,\delta
,q}\left( \varepsilon s^{-\kappa }\right) \right) -f\left( t,s\right) }{%
\varepsilon }\right)   \notag \\
&=&\frac{\partial ^{\alpha }}{\partial t^{\alpha }}\left( \underset{%
\varepsilon \rightarrow 0}{\lim }\frac{f\left( t,s+\displaystyle\frac{\Gamma \left( \beta
\right) \left( \rho \right) _{q}}{\Gamma \left( \gamma +\beta \right) \left(
\delta \right) _{p}}\varepsilon s^{1-\kappa }+O\left( \varepsilon
^{2}\right) \right) -f\left( t,s\right) }{\varepsilon }\right) .
\end{eqnarray*}

Introducing the following change of variable $h=\varepsilon s^{1-\kappa }\left( \displaystyle\frac{\Gamma \left( \beta \right) \left( \rho \right) _{q}}{\Gamma \left( \gamma +\beta \right) \left( \delta \right)
_{p}}+O\left( \varepsilon \right) \right) $ implies $\varepsilon =\displaystyle\frac{h}{s^{1-\kappa }\left( \displaystyle \frac{\Gamma \left( \beta \right) \left( \rho \right) _{q}}{\Gamma \left( \gamma +\beta \right) \left( \delta \right) _{p}}+O\left( \varepsilon \right) \right) }$, we get
\begin{equation*}
\frac{\partial ^{\alpha }}{\partial t^{\alpha }}\left( \frac{\partial
^{\kappa }}{\partial t^{\kappa }}f\left( t,s\right) \right) =\frac{\partial
^{\alpha }}{\partial t^{\alpha }}\left( \underset{\varepsilon \rightarrow 0}{%
\lim }\frac{\displaystyle\frac{f\left( t,s+h\right) -f\left( t,s\right) }{hs^{\kappa -1}}%
}{\displaystyle\frac{\Gamma \left( \beta \right) \left( \rho \right) _{q}}{\Gamma \left(
\gamma +\beta \right) \left( \delta \right) _{p}}+O\left( \varepsilon
\right) }\right).
\end{equation*}

Since $f$ is differentiable in $s$-direction, we obtain
\begin{equation*}
\frac{\partial ^{\alpha }}{\partial t^{\alpha }}\left( \frac{\partial
^{\kappa }}{\partial t^{\kappa }}f\left( t,s\right) \right) =s^{1-\kappa }%
\frac{\Gamma \left( \beta \right) \left( \rho \right) _{q}}{\Gamma \left(
\gamma +\beta \right) \left( \delta \right) _{p}}\frac{\partial ^{\alpha }}{%
\partial t^{\alpha }}\left( \frac{\partial }{\partial s}f\left( t,s\right)\right).
\end{equation*}

Again, by the definition of the truncated $\mathcal{V}$-fractional derivative we have
\begin{equation*}
\frac{\partial ^{\alpha }}{\partial t^{\alpha }}\left( \frac{\partial
^{\kappa }}{\partial t^{\kappa }}f\left( t,s\right) \right) =s^{1-\kappa }%
\frac{\Gamma \left( \beta \right) \left( \rho \right) _{q}}{\Gamma \left(
\gamma +\beta \right) \left( \delta \right) _{p}}\left\{ \underset{%
\varepsilon \rightarrow 0}{\lim }\frac{\frac{\partial }{\partial s}f\left(
t\,_{i}H_{\gamma ,\delta ,p}^{\rho ,\delta ,q}\left( \varepsilon t^{-\alpha
}\right) ,s\right) -\frac{\partial }{\partial s}f\left( t,s\right) }{%
\varepsilon }\right\}.
\end{equation*}

In analogy to the expression, after making a similar change of variable, we have
\begin{equation*}
\frac{\partial ^{\alpha }}{\partial t^{\alpha }}\left( \frac{\partial
^{\kappa }}{\partial t^{\kappa }}f\left( t,s\right) \right) =\frac{\Gamma
\left( \beta \right) \left( \rho \right) _{q}s^{1-\kappa }t^{1-\alpha }}{%
\Gamma \left( \gamma +\beta \right) \left( \delta \right) _{p}}\underset{%
k\rightarrow 0}{\lim }\frac{\frac{\partial }{\partial s}f\left( t+k,s\right)
-\frac{\partial }{\partial s}f\left( t,s\right) }{k}.
\end{equation*}

Since $f$ is differentiable in $t$-direction, we obtain
\begin{equation}\label{LK}
\frac{\partial ^{\alpha }}{\partial t^{\alpha }}\left( \frac{\partial
^{\kappa }}{\partial t^{\kappa }}f\left( t,s\right) \right) =\left( \frac{%
\Gamma \left( \beta \right) \left( \rho \right) _{q}}{\Gamma \left( \gamma
+\beta \right) \left( \delta \right) _{p}}\right) ^{2}s^{1-\kappa
}t^{1-\alpha }\frac{\partial ^{2}}{\partial t\partial s}f\left( t,s\right). 
\end{equation}

Being $f$ a continuous function and using the Clairaut-Schwarz theorem for partial derivative, it follows that
\begin{equation*}
\frac{\partial ^{2}}{\partial t\partial s}f\left( t,s\right) =\frac{\partial
^{2}}{\partial s\partial t}f\left( t,s\right).
\end{equation*}

Therefore the {\rm{Eq.(\ref{LK})}}, becomes
\begin{eqnarray}\label{LK1}
\frac{\partial ^{\alpha }}{\partial t^{\alpha }}\left( \frac{\partial
^{\kappa }}{\partial t^{\kappa }}f\left( t,s\right) \right)  &=&\left( \frac{%
\Gamma \left( \beta \right) \left( \rho \right) _{q}}{\Gamma \left( \gamma
+\beta \right) \left( \delta \right) _{p}}\right) ^{2}s^{1-\kappa
}t^{1-\alpha }\frac{\partial ^{2}}{\partial s\partial t}f\left( t,s\right)  
\notag \\
&=&\left( \frac{\Gamma \left( \beta \right) \left( \rho \right) _{q}}{\Gamma
\left( \gamma +\beta \right) \left( \delta \right) _{p}}\right)
^{2}s^{1-\kappa }t^{1-\alpha }\underset{h\rightarrow 0}{\lim }\frac{\frac{%
\partial }{\partial t}f\left( t,s+h\right) -\frac{\partial }{\partial t}%
f\left( t,s\right) }{h}.\notag \\
\end{eqnarray}

Thus, taking $h=\varepsilon s^{1-\kappa }\left( \displaystyle\frac{\Gamma \left( \beta \right) \left(
\rho \right) _{q}}{\Gamma \left( \gamma +\beta \right) \left( \delta \right)
_{p}}+O\left( \varepsilon \right) \right) $ and later $k=\varepsilon t^{1-\alpha }\left( \displaystyle\frac{\Gamma \left( \beta \right) \left(\rho \right) _{q}}{\Gamma \left( \gamma +\beta \right) \left( \delta \right)
_{p}}+O\left( \varepsilon \right) \right) $ in the {\rm{Eq.(\ref{LK1})}}, we arrive at
\begin{equation*}
\frac{\partial ^{\alpha }}{\partial t^{\alpha }}\left( \frac{\partial
^{\kappa }}{\partial t^{\kappa }}f\left( t,s\right) \right) =\frac{\partial
^{\kappa }}{\partial t^{\kappa }}\left( \underset{h\rightarrow 0}{\lim }%
\frac{\frac{\partial }{\partial t}f\left( t,s+h\right) -\frac{\partial }{%
\partial t}f\left( t,s\right) }{h}\right) =\frac{\partial ^{\kappa }}{%
\partial t^{\kappa }}\left( \frac{\partial ^{\alpha }}{\partial t^{\alpha }}%
f\left( t,s\right) \right),
\end{equation*}
which completes the proof.
\end{proof}

We define the $\mathcal{V}$-fractional vector at the point $a$, given by
\begin{equation*}
\bigtriangledown _{\alpha }f\left( a\right) =\left( \frac{\partial ^{\alpha }%
}{\partial t^{\alpha }}f\left( a\right) ,\frac{\partial ^{\kappa }}{\partial
s^{\kappa }}f\left( a\right) \right) .
\end{equation*}

The next example, is a direct application of the {\rm Theorem \ref{PO}}.

\begin{example} Consider $f(t,s)=e^{a(t+s)}$ with $a\in\mathbb{R}$ satisfying the conditions of {\rm Theorem\ref{PO}}, then we have
\begin{eqnarray}\label{PO1}
\frac{\partial ^{\alpha }}{\partial t^{\alpha }}\left( \frac{\partial
^{\kappa }}{\partial s^{\kappa }}f\left( t,s\right) \right)  &=&\frac{%
\partial ^{\alpha }}{\partial t^{\alpha }}\left( \frac{s^{1-\kappa }\Gamma
\left( \beta \right) \left( \rho \right) _{q}}{\Gamma \left( \gamma +\beta
\right) \left( \delta \right) _{p}}\frac{\partial }{\partial s}e^{a\left(
s+t\right) }\right)   \nonumber \\
&=&a\frac{s^{1-\kappa }\Gamma \left( \beta \right) \left( \rho \right) _{q}}{%
\Gamma \left( \gamma +\beta \right) \left( \delta \right) _{p}}\frac{%
\partial ^{\alpha }}{\partial t^{\alpha }}e^{a\left( s+t\right) }  \nonumber
\\
&=&a^{2}s^{1-\kappa }t^{1-\alpha }\left( \frac{\Gamma \left( \beta \right)
\left( \rho \right) _{q}}{\Gamma \left( \gamma +\beta \right) \left( \delta
\right) _{p}}\right) ^{2}e^{a\left( t+s\right) }
\end{eqnarray}
and
\begin{eqnarray}\label{PO2}
\frac{\partial ^{\kappa }}{\partial s^{\kappa }}\left( \frac{\partial
^{\alpha }}{\partial t^{\alpha }}f\left( t,s\right) \right)  &=&\frac{%
\partial ^{\kappa }}{\partial s^{\kappa }}\left( \frac{t^{1-\alpha }\Gamma
\left( \beta \right) \left( \rho \right) _{q}}{\Gamma \left( \gamma +\beta		
\right) \left( \delta \right) _{p}}\frac{\partial }{\partial t}e^{a\left(
s+t\right) }\right)   \nonumber \\
&=&a\frac{t^{1-\alpha }\Gamma \left( \beta \right) \left( \rho \right) _{q}}{%
\Gamma \left( \gamma +\beta \right) \left( \delta \right) _{p}}\frac{%
\partial ^{\kappa }}{\partial s^{\kappa }}e^{a\left( s+t\right) }  \nonumber
\\
&=&a^{2}s^{1-\kappa }t^{1-\alpha }\left( \frac{\Gamma \left( \beta \right)
\left( \rho \right) _{q}}{\Gamma \left( \gamma +\beta \right) \left( \delta
\right) _{p}}\right) ^{2}e^{a\left( t+s\right) }.
\end{eqnarray}

Thus, by {\rm Eq.(\ref{PO1})} and {\rm Eq.(\ref{PO2})} we conclude that 
\begin{equation*}
\frac{\partial ^{\alpha }}{\partial t^{\alpha }}\left( \frac{\partial
^{\kappa }}{\partial s^{\kappa }}f\left( t,s\right) \right) =\frac{\partial
^{\kappa }}{\partial s^{\kappa }}\left( \frac{\partial ^{\alpha }}{\partial
t^{\alpha }}f\left( t,s\right) \right) .
\end{equation*}
\end{example}

\begin{theorem} {\rm (truncated $\mathcal{V}$-fractional Green’s theorem)} Let $C$ be a simple positively oriented, piecewise smooth and close curve in $\mathbb{R}^2$, say for instance the $x-y$ plane, furthermore assume $D$ in the interior of $C$. If $f(x,y)$ and $g(x,y)$ are two functions having continuous partial truncated $\mathcal{V}$-fractional derivative on $D$ then
\begin{equation*}
\int \int_{D}\left( \frac{\partial ^{\alpha }}{\partial x^{\alpha }}g -\frac{\partial ^{\alpha }}{\partial y^{\alpha }}f \right) d_{\omega }S=\int_{C}\frac{\partial ^{\alpha -1}}{%
\partial y^{\alpha -1}}f d_{\omega }x-\frac{\partial
^{\alpha -1}}{\partial x^{\alpha -1}}g d_{\omega }y,
\end{equation*}
where $d_{\omega }S=\left( \dfrac{\Gamma \left( \gamma +\beta \right) \left( \delta
\right) _{p}}{\Gamma \left( \beta \right) \left( \rho \right) _{q}}\right)
^{2}x^{\alpha -1}y^{\alpha -1}dxdy$, with $d_{\omega}x$ and $d_{\omega}y$, given by {\rm Remark {\ref{fe}}}.
\end{theorem}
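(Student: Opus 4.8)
The plan is to reduce the statement to the classical (integer-order) Green's theorem by stripping off the fractional weights with part 5 of Theorem \ref{teo2} and the measure conventions of Definition \ref{def9} and Remark \ref{fe}. Throughout I would write $\Theta=\frac{\Gamma(\beta)(\rho)_{q}}{\Gamma(\gamma+\beta)(\delta)_{p}}$, so that for a function differentiable in the ordinary sense part 5 of Theorem \ref{teo2} gives $\frac{\partial^{\alpha}}{\partial x^{\alpha}}g=\Theta\,x^{1-\alpha}\frac{\partial g}{\partial x}$ and $\frac{\partial^{\alpha}}{\partial y^{\alpha}}f=\Theta\,y^{1-\alpha}\frac{\partial f}{\partial y}$, while Remark \ref{fe} yields $d_{\omega}x=\Theta^{-1}x^{\alpha-1}dx$, $d_{\omega}y=\Theta^{-1}y^{\alpha-1}dy$ and $d_{\omega}S=\Theta^{-2}x^{\alpha-1}y^{\alpha-1}\,dx\,dy$. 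Since the fractional partials require positive arguments, I would work on $D\subset(0,\infty)^{2}$, where these weights are continuous.

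First I would substitute these identities into the surface integral on the left. The factors of $\Theta$ and the power weights cancel cleanly: the $g$-term becomes $\Theta\,x^{1-\alpha}\frac{\partial g}{\partial x}\cdot\Theta^{-2}x^{\alpha-1}y^{\alpha-1}=\Theta^{-1}y^{\alpha-1}\frac{\partial g}{\partial x}$ and the $f$-term becomes $\Theta^{-1}x^{\alpha-1}\frac{\partial f}{\partial y}$, so that
\begin{equation*}
\int\int_{D}\left(\frac{\partial^{\alpha}}{\partial x^{\alpha}}g-\frac{\partial^{\alpha}}{\partial y^{\alpha}}f\right)d_{\omega}S=\Theta^{-1}\int\int_{D}\left(y^{\alpha-1}\frac{\partial g}{\partial x}-x^{\alpha-1}\frac{\partial f}{\partial y}\right)dx\,dy.
\end{equation*}
The decisive observation is that $y^{\alpha-1}$ is independent of $x$ and $x^{\alpha-1}$ is independent of $y$, so the weights pass inside the derivatives: setting $P:=\Theta^{-1}x^{\alpha-1}f$ and $Q:=\Theta^{-1}y^{\alpha-1}g$ one gets $\frac{\partial Q}{\partial x}=\Theta^{-1}y^{\alpha-1}\frac{\partial g}{\partial x}$ and $\frac{\partial P}{\partial y}=\Theta^{-1}x^{\alpha-1}\frac{\partial f}{\partial y}$, turning the integrand into the classical form $\frac{\partial Q}{\partial x}-\frac{\partial P}{\partial y}$.

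At this point the hypotheses pay off: continuity of the truncated $\mathcal{V}$-fractional partials of $f$ and $g$ on $D$ forces, through part 5 of Theorem \ref{teo2} and the positivity of $x,y$, continuity of the ordinary partials $\frac{\partial f}{\partial y}$ and $\frac{\partial g}{\partial x}$, so that classical Green's theorem applies to $P,Q$ over the region $D$ bounded by the positively oriented curve $C$. It gives
\begin{equation*}
\int\int_{D}\left(\frac{\partial Q}{\partial x}-\frac{\partial P}{\partial y}\right)dx\,dy=\int_{C}P\,dx+Q\,dy=\int_{C}\Theta^{-1}x^{\alpha-1}f\,dx+\Theta^{-1}y^{\alpha-1}g\,dy,
\end{equation*}
and recognizing $\Theta^{-1}x^{\alpha-1}dx=d_{\omega}x$ and $\Theta^{-1}y^{\alpha-1}dy=d_{\omega}y$ rewrites the boundary term as $\int_{C}f\,d_{\omega}x+g\,d_{\omega}y$.

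The main obstacle is the final identification with the right-hand side as stated, namely reconciling the line integral $\int_{C}f\,d_{\omega}x+g\,d_{\omega}y$ produced by the computation with the order-$(\alpha-1)$ operators appearing in the claim. This forces one to fix a convention for the negative-order symbols $\frac{\partial^{\alpha-1}}{\partial y^{\alpha-1}}$ and $\frac{\partial^{\alpha-1}}{\partial x^{\alpha-1}}$ as fractional antiderivatives (in the sense of the $\mathcal{V}$-fractional integral of Definition \ref{def9}) and to verify that, under this convention and the chosen orientation of $C$, the boundary densities $f\,d_{\omega}x$ and $g\,d_{\omega}y$ agree with $\frac{\partial^{\alpha-1}}{\partial y^{\alpha-1}}f\,d_{\omega}x$ and $-\frac{\partial^{\alpha-1}}{\partial x^{\alpha-1}}g\,d_{\omega}y$, including the bookkeeping of the minus sign. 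Everything preceding this step is routine weight algebra together with an invocation of the classical theorem; the genuine care lies entirely in the interpretation of the order-$(\alpha-1)$ operators and in matching the signs.
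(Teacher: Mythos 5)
Your route is genuinely different from the paper's, and considerably more careful. The paper's proof is a two-line formal manipulation: it rewrites $\frac{\partial^{\alpha}}{\partial x^{\alpha}}g=\frac{\partial}{\partial x}\bigl(\frac{\partial^{\alpha-1}}{\partial x^{\alpha-1}}g\bigr)$ (with no definition of the order-$(\alpha-1)$ operator anywhere in the text), then invokes classical Green's theorem with $P=\frac{\partial^{\alpha-1}}{\partial y^{\alpha-1}}f$ and $Q=\frac{\partial^{\alpha-1}}{\partial x^{\alpha-1}}g$ while silently treating the weighted measure $d_{\omega}S$ as $dS$ and $d_{\omega}x,d_{\omega}y$ as $dx,dy$; its final display even carries a $+$ sign where the theorem statement has a $-$. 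You instead strip the weights explicitly via Theorem \ref{teo2}(5) and Remark \ref{fe}, observe that $y^{\alpha-1}$ and $x^{\alpha-1}$ commute with $\partial/\partial x$ and $\partial/\partial y$ respectively so that $P=\Theta^{-1}x^{\alpha-1}f$, $Q=\Theta^{-1}y^{\alpha-1}g$ put the integrand in the form $Q_{x}-P_{y}$, and land on the clean identity
\begin{equation*}
\int\int_{D}\left(\frac{\partial^{\alpha}}{\partial x^{\alpha}}g-\frac{\partial^{\alpha}}{\partial y^{\alpha}}f\right)d_{\omega}S=\int_{C}f\,d_{\omega}x+g\,d_{\omega}y.
\end{equation*}
This is the rigorous content that the paper's argument only gestures at, and it additionally makes visible the hypotheses actually needed (ordinary $C^{1}$ regularity of $f,g$ on a region with $x,y>0$).

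The one step you leave open --- reconciling $\int_{C}f\,d_{\omega}x+g\,d_{\omega}y$ with the stated right-hand side $\int_{C}\frac{\partial^{\alpha-1}}{\partial y^{\alpha-1}}f\,d_{\omega}x-\frac{\partial^{\alpha-1}}{\partial x^{\alpha-1}}g\,d_{\omega}y$ --- is not a defect of your argument but of the source: the paper never defines $\frac{\partial^{\alpha-1}}{\partial\,\cdot\,^{\alpha-1}}$, and its own proof terminates with the opposite sign from its own statement. Your diagnosis that the remaining work is entirely a matter of fixing a convention for the negative-order symbols and of sign bookkeeping is exactly right; as the theorem is literally written, it cannot be verified, and your computation is the correct substitute for it.
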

\begin{proof}
In fact, note that
\begin{equation}\label{PO4}
\int \int_{D}\left( \frac{\partial ^{\alpha }}{\partial x^{\alpha }}g -\frac{\partial ^{\alpha }}{\partial y^{\alpha }}f\right) d_{\omega }S=\int \int_{D}\left[ \frac{\partial }{%
\partial x}\left( \frac{\partial ^{\alpha -1}}{\partial x^{\alpha -1}}%
g \right) -\frac{\partial }{\partial y}\left( \frac{%
\partial ^{\alpha -1}}{\partial y^{\alpha -1}}f \right) %
\right] d_{\omega }S.
\end{equation}

Applying the classical version of the Green's theorem {\rm \cite{CAC}},
\begin{equation*}
\int \int_{D}\left( \frac{\partial Q}{\partial x}-\frac{\partial }{\partial y
}P\right) dS=\int_{C}\left( Pdx+Qdy\right) 
\end{equation*}
into {\rm Eq.(\ref{PO4})}, we conclude that
\begin{equation*}
\int \int_{D}\left( \frac{\partial ^{\alpha }}{\partial x^{\alpha }}g -\frac{\partial ^{\alpha }}{\partial y^{\alpha }}f \right) d_{\omega }S=\int_{C}\frac{\partial ^{\alpha -1}}{%
\partial y^{\alpha -1}}f d_{\omega }x+\frac{\partial
^{\alpha -1}}{\partial x^{\alpha -1}}g d_{\omega }y.
\end{equation*}
\end{proof}

%%%%%%%%%%%%%%%%%%%%%%%%%%%%%%%%%%%%%%%%%%%%%%%%%%%%%%%%%%%%%%%%%%%%%%%%%%%%%%%%%%%%%%%%%%%%%%%%%%%%%%%%%%%%%%%%%%%%%%%%%%%%%%%%%%%%%%%%%%%%%%%%%%%%%%%%%%%%%%%%%%%%%%%%%%%%%%%%%%%%%%%%%%%%%%%%

\section{Concluding remarks}
After a brief introduction to the truncated six-parameters Mittag-Leffler function and the truncated $\mathcal{V}$-fractional derivative with domain of function in $\mathbb{R}$ and the validity of some important results, we have introduced the multivariable truncated $\mathcal{V}$-fractional derivative, that is, with domain of the function in $\mathbb{R}^n$. In this sense, we discussed and proved classical theorems such as: the chain rule, the commutativity of the exponent of two truncated $\mathcal{V}$-fractional derivatives and Green's theorem.

We conclued that: a variety of new fractional derivatives of said local have been recently introduced, all them satisfy the requirements of the integer-order derivative, and have been employed to deal more effectively with real problems and their physical properties \cite{CAC,CAP,MKR,BDMA}. The dynamics of systems over time, becomes more complex and more precise mathematical tools are needed to solve certain theoretical and practical problems. In this theoretical and applicable sense, we extended the idea of truncated $\mathcal{V}$-fractional derivative of a variable, so it is possible to work with differential equations with several variables consequently make comparisons with the results obtained by means of other fractional derivatives. Studies in direction will be published in a forthcoming paper.

\bibliography{ref}
\bibliographystyle{plain}

\end{document}